\theoremstyle{plain}
\newtheorem{thm}{Theorem}[section]
\newtheorem{lem}[thm]{Lemma}
\newtheorem{prop}[thm]{Proposition}
\newtheorem{conj}{Conjecture}
\theoremstyle{definition}
\newtheorem{setup}[thm]{Set-up}
\newtheorem{eg}[thm]{Example}
\theoremstyle{remark}
\newtheorem{rmk}[thm]{Remark}
\def\Z{{\mathbf Z}}
\def\C{{\mathbf C}}
\def\cJ{\mathcal{J}}
\def\cO{\mathcal{O}}
\def\cY{\mathcal{Y}}
\def\cZ{\mathcal{Z}}
\def\Y{\mathcal{Y}}
\def\fra{\mathfrak{a}}
\def\frb{\mathfrak{b}}
\def\frm{\mathfrak{m}}
\def\.{\cdot}
\def\^{\widehat}
\def\({\left(}
\def\){\right)}
\renewcommand{\and}{ \ \ \text{ and } \ \ }
\DeclareMathOperator{\lcm} {lcm}
\DeclareMathOperator{\lct} {lct}
\DeclareMathOperator{\vlct}{vlct}
\begin{document}

\title{On a conjecture of Teissier: the case of log canonical thresholds}

\author{Eva Elduque}
\author{Mircea Musta\c{t}\u{a}}

\address{Department of Mathematics, University of Michigan, 530 Church Street, Ann Arbor, MI 48109, USA}

\email{elduque@umich.edu}
\email{mmustata@umich.edu}

\thanks{The second author was partially supported by NSF grant DMS-1701622.}

\subjclass[2010]{14B05, 14F18, 32S25}

\dedicatory{Dedicated to Slava Shokurov, on the~occasion of
his~seventieth~birthday}

\begin{abstract}
For a smooth germ of algebraic variety $(X,0)$ and a hypersurface $(f=0)$ in $X$, with an isolated singularity at $0$,
Teissier \cite{Teissier2} conjectured a lower bound for the Arnold exponent of $f$
in terms of the Arnold exponent of a hyperplane section $f\vert_H$ and the invariant $\theta_0(f)$ of the hypersurface.
By building on an approach due to Loeser \cite{Loeser}, we prove the conjecture in the case of log canonical thresholds.
\end{abstract}

\maketitle

\section{The statement of the conjecture}

Let $X$ be a smooth complex $n$-dimensional algebraic variety, $f\in\cO_X(X)$ a nonzero regular function on $X$ and $P\in X$ a point in the zero-locus of $f$.
We denote by ${\mathfrak m}_P$ the ideal sheaf of regular functions vanishing at $P$ and by
 $J_f$ the Jacobian ideal of $f$: if $x_1,\ldots,x_n$ are algebraic coordinates in an open subset $U$ of $X$ (that is, $x_1,\ldots,x_n$ are regular functions on $U$ such that $dx_1,\ldots,dx_n$ trivialize the cotangent sheaf $\Omega_{U}$), then $J_f$
is generated in $U$ by $\frac{\partial f}{\partial x_1},\ldots,\frac{\partial f}{\partial x_n}$. From now on, we assume that $f$ has an isolated
singularity at $P$, that is, there is an open neighborhood $U$ of $P$ such that $J_f$ does not vanish at any point in $U\smallsetminus\{P\}$.

Teissier introduced and studied in \cite{Teissier1} the invariant $\theta_P(f)$, that can be described by comparing the order of vanishing of $J_f$
with the order of vanishing of ${\mathfrak m}_P$ along the divisorial valuations centered at $P$ (for a precise definition and a brief discussion, see
the beginning of the next section). We also consider
the \emph{Arnold exponent} $\sigma_P(f)$ of $f$ at $P$, that can be defined 
via the asymptotic behavior of integrals over the vanishing cycles of $f$ at $P$, see for example \cite[Section~9]{Kollar}. We consider the following
conjecture of Teissier \cite{Teissier2}:

\begin{conj}\label{conj1}
Let $X$ be a smooth complex $n$-dimensional algebraic variety, $f\in\cO_X(X)$ nonzero, and $P\in X$ a point in the zero-locus of $f$, such that $f$ has an isolated singularity at $P$. 
If $H_1,\ldots,H_{n-1}$ are hypersurfaces in $X$, passing through $P$, such that each $\Lambda_i:=H_1\cap\ldots\cap H_i$ is smooth at $P$,
of dimension $n-i$, and such that $f_i:=f\vert_{\Lambda_i}$ has an isolated singularity at $P$, then 
$$\sigma_P(f)\geq\frac{1}{1+\theta_P(f)}+\frac{1}{1+\theta_P(f_1)}+\ldots+\frac{1}{1+\theta_P(f_{n-1})}.$$
\end{conj}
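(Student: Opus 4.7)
The plan is to establish the stronger inequality obtained by replacing $\sigma_P(f)$ with the log canonical threshold $\lct_P(f)$; this implies the stated conjecture since $\sigma_P(f)\geq\lct_P(f)$ for isolated hypersurface singularities. The proof proceeds by induction on $n$.

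The base case $n=1$ is immediate: if $m=\ord_P(f)$, then $\lct_P(f)=1/m$ and $J_f=(f')$ vanishes to order $m-1$, so $\theta_P(f)=m-1$ and equality holds. For the inductive step, applying the statement in dimension $n-1$ to $f_1=f|_{\Lambda_1}$ with the induced flag gives $\lct_P(f_1)\geq\sum_{i=1}^{n-1}(1+\theta_P(f_i))^{-1}$, so it suffices to establish the two-term restriction inequality
$$\lct_P(f)\;\geq\;\frac{1}{1+\theta_P(f)}+\lct_P(f_1),\qquad(\ast)$$
which is the technical heart of the matter and the place where Loeser's method must enter.

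To prove $(\ast)$, I would choose local coordinates $(x_1,\ldots,x_n)$ with $x_n$ cutting out $H_1$, giving a Hadamard-type decomposition $f=f_1+x_n g$ in which $g|_{H_1}=\partial f/\partial x_n|_{H_1}$ is, up to the chain rule, a section of $J_f$. Take a common log resolution $\pi\colon Y\to X$ of the ideal $(f)\cdot J_f\cdot\mathfrak{m}_P\cdot\cI_{H_1}$ such that the strict transform $\widetilde H_1$ is smooth and transverse to the exceptional divisor; because $f_1$ has an isolated singularity, $\pi|_{\widetilde H_1}$ is automatically a log resolution of $f_1$ near $P$. For each prime divisor $E\subset Y$ centered at $P$ record
$$a_E=\ord_E(f),\ \ a_E'=\ord_E(f_1),\ \ j_E=\ord_E(J_f),\ \ \mu_E=\ord_E(\mathfrak{m}_P),\ \ k_E=\ord_E(K_{Y/X}).$$
The valuative description of Teissier's invariant yields $j_E\leq\theta_P(f)\mu_E$ for all such $E$, and $\lct_P(f)=\min_E(k_E+1)/a_E$.

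The analysis then splits on the position of the minimizing $E$ relative to $\widetilde H_1$. When $E$ meets $\widetilde H_1$, adjunction on $\widetilde H_1$ together with the fact that $\pi|_{\widetilde H_1}$ resolves $f_1$ produces a lower bound of the form $(k_E+1)/a_E\geq\lct_P(f_1)+\delta_E$, where $\delta_E$ measures the "vertical" slack arising from $\ord_E(x_n)$; one then absorbs $\delta_E\geq 1/(1+\theta_P(f))$ by combining the Jacobian bound $j_E\leq\theta_P(f)\mu_E$ with the coordinate decomposition $f=f_1+x_n g$. When $E$ misses $\widetilde H_1$ (so $\ord_E(x_n)=0$), the decomposition gives $a_E\geq a_E'$ directly, and one argues on a generic transverse slice to recover the $\lct_P(f_1)$ contribution while using $j_E\leq\theta_P(f)\mu_E$ to produce the remaining summand.

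The main obstacle I anticipate is the uniform numerical extraction of the precise summand $1/(1+\theta_P(f))$ across both cases — concretely, a valuative inequality of roughly the form $a_E\geq (1+\theta_P(f))\mu_E+(\text{contribution from }f_1)$, holding for every divisor $E$ centered at $P$. In Loeser's original Hodge-theoretic approach to the Arnold-exponent version, such a bound is read off from positivity of a filtration on vanishing cycles; here it must be reformulated algebraically, most likely by combining the chain-rule structure of $J_f$ on a log resolution with a Skoda- or subadditivity-type statement for multiplier ideals.
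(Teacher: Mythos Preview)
Your opening move---replacing $\sigma_P(f)$ by $\lct_P(f)$ on the left-hand side and calling the result ``stronger''---already fails. Recall $\lct_P(f)=\min\{\sigma_P(f),1\}$, so $\lct_P(f)\le 1$ always. Take $f=x_1^2+\cdots+x_n^2$ with $n\ge 4$: then $J_f=\mathfrak m_P$, hence $\theta_P(f)=1$, and the same holds for every $f_i$; the right-hand side of the conjecture equals $n/2>1$, while $\lct_P(f)=1$. The same example kills your two-term inequality $(\ast)$: with $n=4$ one has $\lct_P(f)=1$ but $\lct_P(f_1)+\tfrac{1}{1+\theta_P(f)}=1+\tfrac12$. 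So $(\ast)$ is simply false, and no amount of valuative bookkeeping on a log resolution can salvage it; the ``obstacle'' you anticipate in extracting $1/(1+\theta_P(f))$ is not a technical gap but a reflection of the fact that the target inequality does not hold.

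For context, the paper does \emph{not} prove the conjecture you were handed: it is stated as an open conjecture, and the paper proves only the truncated log canonical version
\[
\lct_P(f)\ \ge\ \min\Bigl\{\lct_P(f\vert_H)+\tfrac{1}{1+\theta_P(f)},\,1\Bigr\},
\]
where the $\min$ with $1$ is essential for exactly the reason above. The paper's method is also entirely different from your resolution-and-adjunction outline: following Loeser, it deforms $f$ along the family $h_t=f(x_1,\dots,x_{n-1},tx_n)+(1-t)x_n^{\theta_P(f)+1}$, makes sense of the fractional exponent by pulling back through a cyclic cover $x_n\mapsto x_n^d$, and then runs a semicontinuity argument near $t=0$ together with a $\mu$-constant/Varchenko-type constancy argument near $t=1$, all phrased via multiplier ideals. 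The full Arnold-exponent conjecture (the statement you were given) is addressed only in a remark pointing to later work using Hodge ideals; if you want to attack it, you must work with $\sigma_P$ directly rather than bounding it below by $\lct_P$.
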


Note that since ${\rm dim}(\Lambda_{n-1})=1$, we have 
$$\frac{1}{1+\theta_P(f_{n-1})}=\frac{1}{{\rm mult}_P(f_{n-1})}=\sigma_P(f_{n-1}).$$
Therefore the above conjecture is implied by the following 
statement concerning the case of one hypersurface:

\begin{conj}\label{conj2}
Let $X$ be a smooth complex $n$-dimensional algebraic variety, $f\in\cO_X(X)$ nonzero, and $P\in X$ a point in the zero-locus of $f$, such that $f$ has an isolated singularity at $P$. 
If $H$ is a smooth hypersurface in $X$, containing $P$, such that $f\vert_H$ has an isolated singularity at $P$, then
$$\sigma_P(f)\geq \sigma_P(f\vert_H)+\frac{1}{1+\theta_P(f)}.$$
\end{conj}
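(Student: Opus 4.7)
The plan is to reduce Conjecture~\ref{conj2} to an inequality of log canonical thresholds and then prove the latter by combining inversion of adjunction with a Teissier-type Łojasiewicz estimate; this may be viewed as an algebraic counterpart to Loeser's analytic arguments in \cite{Loeser}. The reduction rests on the theorem of Varchenko and M.~Saito identifying the Arnold exponent of an isolated hypersurface singularity with the smallest jumping coefficient of the multiplier ideal: $\sigma_P(g) = \lct_P(g)$ for any holomorphic germ $g$ with isolated singularity at $P$ (see, e.g., \cite[\S 9]{Kollar}). Applied to both $f$ and $f|_H$, this turns Conjecture~\ref{conj2} into
$$\lct_P(f)\ \geq\ \lct_P(f|_H)\ +\ \frac{1}{1+\theta_P(f)}. \qquad (*)$$

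For $(*)$, fix a local equation $h$ for $H$ at $P$. Inversion of adjunction (Kawakita) applied to the pair $(X,\,H + c\cdot(f))$ yields the valuation-theoretic formula
$$\lct_P(f|_H)\ =\ \inf_E \frac{k_E + 1 - v_E(h)}{v_E(f)},$$
where $E$ ranges over prime divisors over $X$ centered at $P$, $k_E$ denotes the discrepancy, and $v_E$ the associated divisorial valuation. Since also $\lct_P(f) = \inf_E (k_E+1)/v_E(f)$, for each such $E$ we have
$$\frac{k_E+1}{v_E(f)}\ \geq\ \lct_P(f|_H)\ +\ \frac{v_E(h)}{v_E(f)}.$$
Taking infima over $E$, and then using $v_E(h) \geq v_E(\fm_P)$ (because $h \in \fm_P$), reduces $(*)$ to the Teissier-type inequality
$$\sup_E \frac{v_E(f)}{v_E(\fm_P)}\ \leq\ 1 + \theta_P(f). \qquad (\dagger)$$

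The inequality $(\dagger)$ is the main obstacle. Unwinding the definition $\theta_P(f)=\sup_E v_E(J_f)/v_E(\fm_P)$, $(\dagger)$ asserts that along every divisorial valuation centered at $P$, the order of vanishing of $f$ exceeds $v_E(\fm_P)$ by at most a factor of $1+\theta_P(f)$. A naive attempt via the identity $df = \sum\partial_if\,dx_i$ only produces the reverse bound $v_E(f)\geq v_E(\fm_P) + v_E(J_f)$, equivalently the integral-closure relation $f\in\overline{\fm_P J_f}$, which is the classical form of Teissier's theorem. The genuine upper estimate needed in $(\dagger)$ requires the isolated singularity hypothesis through the fact that $\fm_P^N \subseteq J_f$ for some $N$, combined with matching up the valuations that extremize each side: concretely, one must argue that some Rees valuation of $\fm_P$ realizing the supremum defining $\theta_P(f)$ simultaneously realizes $\sup_E v_E(f)/v_E(\fm_P)$. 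The weighted homogeneous model $f=\sum x_i^{a_i}$, where both sides of $(\dagger)$ equal $\max_i a_i$ (achieved by the same Newton-weighted valuation), serves as the template for the general argument.
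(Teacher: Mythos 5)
Your proposed reduction of Conjecture~\ref{conj2} to the inequality $(*)$ rests on the claim that $\sigma_P(g)=\lct_P(g)$ for an isolated hypersurface singularity, but this identification is false. The correct relation, stated explicitly in the paper just before Theorem~\ref{thm_main}, is $\lct_P(g)=\min\{\sigma_P(g),1\}$: the Arnold exponent can strictly exceed $1$, whereas the log canonical threshold of a hypersurface never does. Consequently $(*)$ is not a reformulation of Conjecture~\ref{conj2}, and in fact $(*)$ is false as you have written it. For instance, with $f=x^2+y^2+z^{100}$ and $H=\{z=0\}$, one has $\lct_0(f)=1$, $\lct_0(f\vert_H)=1$, and $\theta_0(f)=99$, so $(*)$ would require $1\geq 1+\tfrac{1}{100}$. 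This is precisely why the paper's Theorem~\ref{thm_main} carries the $\min\{\cdot,1\}$ on the right-hand side, and why the paper proves only the log canonical threshold statement rather than the full conjecture: the conjecture about $\sigma_P$ requires finer invariants (the paper's closing remark points to Hodge ideals, as developed in \cite{DM}) and cannot be captured by multiplier ideals alone.

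Beyond this, even as an approach to Theorem~\ref{thm_main}, your argument has a genuine gap at the key inequality $(\dagger)$: $\sup_E v_E(f)/v_E(\fm_P)\leq 1+\theta_P(f)$. You correctly observe that Teissier's theorem $f\in\overline{\fm_P\,J_f}$ yields only the reverse bound $v_E(f)\geq v_E(\fm_P)+v_E(J_f)$, and you explicitly leave $(\dagger)$ unproved, deferring to a vague ``matching of extremizing valuations.'' Since $(\dagger)$ is where all the work lies, the proposal does not constitute a proof. It is also worth noting that the paper's actual route to Theorem~\ref{thm_main} is entirely different in spirit: following Loeser, it deforms $f$ through the family $h_t=f(x_1,\dots,x_{n-1},tx_n)+(1-t)x_n^\alpha$ with $\alpha=1+\theta_P(f)$, makes sense of the fractional exponent via a cyclic cover and the associated ``virtual log canonical threshold,'' and then combines generic semicontinuity of multiplier ideals (Lemma~\ref{lem2}), a Thom--Sebastiani computation at $t=0$ (Lemma~\ref{lem1}), and a $\mu$-constancy argument at $t=1$ using Varchenko's theorem (Lemma~\ref{lem3}). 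No inversion-of-adjunction or Łojasiewicz-type estimate of the form $(\dagger)$ appears in the paper's proof.
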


Loeser proved the above conjecture in \cite{Loeser} when $\theta_P(f)$ is an integer. More generally, he showed that under the assumptions in 
Conjecture~\ref{conj2}, we always have
\begin{equation}\label{eq_Loeser}
\sigma_P(f)\geq \sigma_P(f\vert_H)+\frac{1}{1+\lceil\theta_P(f)\rceil},
\end{equation}
where for a real number $\alpha$, we denote by $\lceil\alpha\rceil$ the smallest integer $\geq\alpha$. 

Our main goal in this note is to prove a version of Conjecture~\ref{conj2} for \emph{log canonical thresholds}. For basic facts about log canonical thresholds,
see \cite[Section~8]{Kollar} or \cite{Mustata}. Recall that if $f$ is a regular function on $X$, having an isolated singularity at $P$, then
the log canonical threshold $\lct_P(f)$ of $f$ at $P$ is related to $\sigma_P(f)$ by
$$\lct_P(f)=\min\{\sigma_P(f),1\}$$
(see \cite[Theorem~9.5]{Kollar}). 
Our goal is to prove the statement corresponding to Conjecture~\ref{conj2} for log canonical thresholds:

\begin{thm}\label{thm_main}
Let $X$ be a smooth complex $n$-dimensional algebraic variety and let $f\in\cO_X(X)$ nonzero and having an isolated singularity at $P$. 
If $H$ is a smooth hypersurface in $X$, containing $P$, such that $f\vert_H\neq 0$, then
$$\lct_P(f)\geq\min\left\{\lct_P(f\vert_H)+\frac{1}{1+\theta_P(f)},1\right\}.$$
\end{thm}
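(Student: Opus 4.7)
\noindent The plan is to adapt Loeser's proof of (\ref{eq_Loeser}) by working directly with multiplier ideals and divisorial valuations, exploiting the constraint $\lct_P(f)\leq 1$ to eliminate the round-up $\lceil\theta_P(f)\rceil$ present in Loeser's bound.

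First, I reduce the problem. Since $f(P)=0$ implies $\lct_P(f)\leq 1$, the stated inequality is automatic when $\lct_P(f|_H)+\tfrac{1}{1+\theta_P(f)}\geq 1$. I may therefore assume this sum is $<1$, which forces $\lct_P(f|_H)<1$ and hence $\lct_P(f|_H)=\sigma_P(f|_H)$; and I may further assume $\lct_P(f)<1$, so $\lct_P(f)=\sigma_P(f)$. The theorem then reduces to
$$\sigma_P(f)\;\geq\;\sigma_P(f|_H)+\frac{1}{1+\theta_P(f)}$$
in the range where the right-hand side is $<1$, which is a subrange of Teissier's original Conjecture~\ref{conj2} not covered by Loeser.

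Next, take a log resolution $\pi:Y\to X$ of $(X,H+\divisor(f))$, chosen so that $\pi|_{\widetilde H}:\widetilde H\to H$ is simultaneously a log resolution of $(H,\divisor(f|_H))$, and so that $\pi$ is an isomorphism over $X\setminus\{P\}$. Writing $\pi^*(f)=\widetilde D+\sum_i a_iE_i$, $\pi^{-1}(H)=\widetilde H+\sum_i b_iE_i$, and $K_{Y/X}=\sum_i k_iE_i$, the standard formulas give
$$\lct_P(f)=\min_i\frac{k_i+1}{a_i}\and \lct_P(f|_H)=\min_F\frac{k_i-b_i+1}{a_i},$$
with the second minimum (via adjunction $K_{\widetilde H/H}=(K_{Y/X}-\sum_i b_iE_i)|_{\widetilde H}$) taken over irreducible components $F$ of $\widetilde H\cap E_i$ lying over $P$. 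Teissier's invariant admits the valuative description $\theta_P(f)=\sup_v \ord_v(J_f)/\ord_v(\mathfrak{m}_P)$, the supremum ranging over divisorial valuations $v$ centered at $P$. For an exceptional divisor $E$ realizing $\lct_P(f)=(k_E+1)/a_E$ (which I may arrange to meet $\widetilde H$ in a divisor), since $\lct_P(f|_H)\leq(k_E-b_E+1)/a_E$, the theorem reduces to
$$\frac{b_E}{a_E}\;\geq\;\frac{1}{1+\theta_P(f)};$$
and using $b_E\geq m_E:=\ord_E(\mathfrak{m}_P)$, it suffices to prove
$$\frac{a_E}{m_E}\;\leq\;1+\theta_P(f).$$

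The main obstacle is this last inequality for the $\lct$-extremal $E$. Setting $j_v=\ord_v(J_f)$, the differential relation $j_v\leq a_v-m_v$ (holding for divisorial valuations of weighted blow-ups, and more generally via an Euler-type identity) gives $a_v/m_v\geq 1+j_v/m_v$ for every $v$; the desired reverse bound $a_E/m_E\leq 1+\theta_P(f)$ for the specific $\lct$-extremal $E$ is more delicate, and is precisely where Loeser's argument produces only $\lceil\theta_P(f)\rceil$ in place of $\theta_P(f)$, due to an integrality step in the $b$-function analysis. The plan is to bypass this integrality by staying within the multiplier-ideal / log-resolution framework and perturbing $E$ continuously via a small adjustment of the weights of its associated weighted blow-up, approaching a valuation realizing the supremum defining $\theta_P(f)$. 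The hypothesis $\lct_P(f)<1$ is crucial: it ensures that such an infinitesimal perturbation preserves the $\lct$-extremality of $E$ to leading order, so that passing to a limit removes the round-up and yields the sharp bound $1/(1+\theta_P(f))$.
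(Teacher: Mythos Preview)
The proposal has a genuine gap at the decisive step. After reducing (via inversion of adjunction) to the inequality $a_E/m_E\le 1+\theta_P(f)$ for an $\lct$-computing divisor $E$, you do not prove this inequality. The ``plan'' you describe---perturbing $E$ continuously toward a valuation computing $\theta_P(f)$---is not a proof: divisorial valuations do not vary continuously in any sense that your argument exploits, the $\lct$-extremal and $\theta$-extremal valuations are in general unrelated, and no mechanism is given that would convert the established lower bound $a_v/m_v\ge 1+j_v/m_v$ (which points the wrong way) into the required upper bound for the specific $E$. The claim that $\lct_P(f)<1$ ``ensures that such an infinitesimal perturbation preserves the $\lct$-extremality of $E$ to leading order'' is asserted without justification and has no clear meaning. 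There is also a logical slip in your opening reduction: from $\lct_P(f)\le 1$ one cannot conclude that the theorem is automatic when $\lct_P(f\vert_H)+\tfrac{1}{1+\theta_P(f)}\ge 1$; in that case the theorem asserts $\lct_P(f)=1$, which is precisely what must be shown. (This slip is not fatal, since your divisorial inequality---were it established---would yield $\lct_P(f)\ge\lct_P(f\vert_H)+\tfrac{1}{1+\theta_P(f)}$ directly, without case analysis.)

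For comparison, the paper's proof is entirely different and never attempts to bound $a_E/m_E$. It follows Loeser's deformation strategy: one forms the family $h_t=f(x_1,\dots,x_{n-1},tx_n)+(1-t)x_n^{\alpha}$ with the rational exponent $\alpha=1+\theta_0(f)$, gives meaning to $x_n^{\alpha}$ by pulling back along the cyclic cover $x_n\mapsto x_n^d$ with $d\alpha\in\Z$, and introduces a \emph{virtual} log canonical threshold $\vlct_0(h_t)$ via multiplier ideals on the cover relative to $K_{Y/X}$. Three lemmas then do the work: $\vlct_0(h_0)=\min\{\lct_0(g)+\tfrac{1}{\alpha},1\}$ (a Thom--Sebastiani type computation using the summation theorem for multiplier ideals); lower semicontinuity of $\vlct_0(h_t)$ at $t=0$; and upper semicontinuity at $t=1$, proved by showing the Milnor number of the pulled-back family is constant near $t=1$ when $\alpha\ge 1+\theta_0(f)$, then invoking Varchenko's constancy of the spectrum and Budur's link between spectrum and multiplier ideals. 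Combining these at a point $t\in U\cap V$ gives the theorem.
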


To prove the theorem, we build on Loeser's idea from \cite{Loeser}. Assuming that $f\in\C[x_1,\ldots,x_n]$, $P$ is the origin, and 
$H$ is the hyperplane $x_n=0$, Loeser considers the family 
$$h_t=f(x_1,\ldots,x_{n-1},tx_n)+(1-t)x_n^{\lceil\theta\rceil+1},$$
where $\theta=\theta_P(f)$. The result is then obtained by making use of basic properties of Arnold exponents
(we give more details about this in the next section). Even when $\theta$ is not an integer, we want to consider the
family 
$$h_t=f(x_1,\ldots,x_{n-1},tx_n)+(1-t)x_n^{\theta+1}.$$
In order to make sense of this, we pull back via the cyclic cover $\pi$ 
 given by $(x_1,\ldots,x_n)\to (x_1,\ldots,x_{n-1},x_n^d)$, where $d$ is a suitable positive integer;
 instead of dealing with log canonical thresholds, we have to consider the jumping
 numbers for the multiplier ideals of $h_t\circ\pi$ with respect to the equation $x_n^{d-1}$ defining
 the relative canonical divisor of $\pi$.

\begin{rmk}\label{non_isolated_sing}
We note that unlike in Conjecture~\ref{conj2}, in Theorem~\ref{thm_main} we do not need to require that $f\vert_H$ has isolated singularities
at $P$. On one hand, the log canonical threshold $\lct_P(f\vert_H)$ is defined whenever $f\vert_H$ is nonzero; on the other hand, 
we can always find a smooth hypersurface $H'$ containing $P$ such that $f\vert_{H'}$ has an isolated singularity at $P$ and $\lct_P(f\vert_{H'})\geq\lct_P(f\vert_H)$.
Indeed, after possibly replacing $X$ by an affine open neighborhood of $P$, we may assume that $X$ is affine. We can choose a system
of algebraic coordinates $x_1,\ldots,x_n$ on $X$, centered at $P$, such that $H$ is defined by $x_1$. If we take $H'$ to be defined by a general linear combination
of $x_1,\ldots,x_n$, then $f\vert_{H'}$ has isolated singularities and $\lct_P(f\vert_{H'})\geq\lct_P(f\vert_H)$ by 
the semicontinuity of log canonical thresholds (see \cite[Theorem~3.1]{DemaillyKollar}
or \cite[Theorem~4.9]{Mustata0}).
\end{rmk}

\begin{rmk}\label{holomorphic_case}
One could formulate Theorem~\ref{thm_main} (and similarly Conjecture~\ref{conj2}) in the more general setting when $X$ is a complex manifold and $f$
is a holomorphic function having an isolated singularity at $P$. However, this version follows easily from the algebraic case. Indeed, without any loss
of generality, we may assume that $X\subseteq\C^n$ is an open subset, $P$ is the origin, and $H$ is the hyperplane $(x_n=0)$.
 It is a basic result in singularity theory that if $g$ is a holomorphic function at $0$,
with ${\rm mult}_0(g)\gg 0$ (in fact, it is enough to take ${\rm mult}_0(g)\geq 2+\dim_{\C}(\cO_{\C^n,0}/J_f)$, see
\cite[Corollary~2.24]{GLS}), then $f$ and $f+g$ differ by an analytic change of
coordinates. The same property holds for $f\vert_H$ and $(f+g)\vert_H$ if ${\rm mult}_0(g)\gg 0$ (note that we may assume by 
Remark~\ref{non_isolated_sing} that $f\vert_{H}$ has an isolated singularity at $0$ as well).
We thus have
$$\lct_0(f)=\lct_0(f+g), \quad \lct_0(f\vert_H)=\lct_0\big((f+g)\vert_H),\quad\text{and}\quad \theta_0(f)=\theta_0(f+g).$$
Therefore we may suitably truncate 
$f$ to assume that $f$ is a polynomial. 
\end{rmk}

\begin{rmk}
By building on the approach in this paper, the second author together with Bradley Dirks announced in \cite{DM} a proof of the full Teissier conjecture. As we have already discussed, the proof of Theorem~\ref{thm_main} above relies on the description of the log canonical threshold via multiplier ideals and on various properties of multiplier ideals. The Arnold exponent admits a similar description in terms of Hodge ideals, invariants of singularities that come out
of Saito's theory of mixed Hodge modules (see \cite{MP}). The proof in \cite{DM} follows the approach introduced here, after developing the needed 
results for Hodge ideals (namely, the behavior with respect to finite morphisms and the behavior in families of isolated singularities having constant Milnor number).
\end{rmk}

\subsection{Acknowledgements} We are grateful to Tommaso de Fernex and Alexandru Dimca for useful discussions and to the anonymous
referee for comments on a previous version of the paper.

\section{Proof of the main result}

We begin by reviewing the definition of Teissier's invariant $\theta_P(f)$. Recall that $X$ is a smooth complex algebraic variety, $f\in\cO_X(X)$
is a nonzero regular function, and $P$ is a point in the zero-locus of $f$. We assume that $f$ has an isolated singularity at $P$ and denote by
$J_f$ and ${\mathfrak m}_P$ the Jacobian ideal of $f$, respectively, the ideal defining $P$. 

The description of $\theta_P(f)$ that we use is the following:
\begin{equation}\label{eq1_theta}
\theta_P(f):=\sup_E\frac{{\rm ord}_E(J_f)}{{\rm ord}_E({\mathfrak m}_P)},
\end{equation}
where $E$ varies over the prime divisors on normal complex algebraic varieties $Y$, with a birational morphism $\pi\colon Y\to X$, such that $E$ maps to $P$.
For such $E$, we denote by ${\rm ord}_E(g)$ the coefficient of $E$ in the divisor associated to $g\circ\pi$; for a coherent ideal $J$ on $X$, we denote 
by ${\rm ord}_E(J)$ the minimum of ${\rm ord}_E(g)$, where $g$ runs over a system of generators of $J$ in a neighborhood of $P$.
Note that if $f$ does not have a singularity at $P$, then $J_f=\cO_X$ in a neighborhood of $P$, hence $\theta_P(f)=0$; otherwise, we have
$J_f\subseteq \frm_P$ and thus $\theta_P(f)\geq 1$. 

\begin{eg}\label{dim1}
If $n=1$ and $m={\rm mult}_P(f)\geq 1$, then in a neighborhood of $P$ we can find a coordinate $x_1$ such that $f=gx_1^m$, where 
$g(P)\neq 0$. In this case $J_f$ is generated in a neighborhood of $P$ by $x_1^{m-1}$ and it is clear that $\theta_P(f)=m-1$. 
\end{eg}

It is standard to see that if $\pi\colon Y\to X$ is a proper birational morphism, with $Y$ normal, such that ${\mathfrak m}_P\cdot \cO_Y$ and
$J_f\cdot \cO_Y$ are locally principal ideals (for example, this holds whenever $\pi$ factors through the blow-up of $X$ along ${\mathfrak m}_P\cdot J_f$),
then
\begin{equation}\label{eq2_theta}
\theta_P(f):=\max_{E_i}\frac{{\rm ord}_{E_i}(J_f)}{{\rm ord}_{E_i}({\mathfrak m}_P)},
\end{equation}
where the $E_i$ are the prime divisors on $Y$ that lie in the fiber over $P$. 
Furthermore, 
we can describe $\theta_P(f)$ in terms of the integral closure of the powers of $J_f$ as follows
(for the definition and basic properties of the integral closure of ideals, see for example \cite[Chapter~9.6.A]{Lazarsfeld}). 
If we denote by $\overline{\fra}$ the integral closure of a coherent ideal $\fra$, then
for every positive integers $r$ and $s$, we have 
\begin{equation}\label{eq3_theta}
{\frm_P^r}\subseteq\overline{J_f^s}\,\,\text{in some neighborhood of $P$}\quad\text{if and only if}\quad \frac{r}{s}\geq \theta_P(f)
\end{equation}
(this follows from the characterization of integral closure of ideals in \cite[Proposition~9.6.6]{Lazarsfeld}).

\bigskip

The proof of Theorem~\ref{thm_main} follows the approach in \cite{Loeser}, so we begin by outlining the proof of
(\ref{eq_Loeser}) in \emph{loc. cit.} Recall that if $f$ has an isolated singularity at $P$, then the \emph{Milnor number}
of $f$ at $P$ is
$$\mu_P(f)=\dim_{\C}(\cO_{X,P}/J_f).$$
Arguing as in Remark~\ref{holomorphic_case}, we reduce to the case when $X=\C^n$, 
$P$ is the origin, $f\in\C[x_1,\ldots,x_n]$, and $H$ is defined by $x_n=0$. We put $g(x_1,\ldots,x_{n-1})=f(x_1,\ldots,x_{n-1},0)$.

The key idea is to consider, for a positive integer $m$, the family of polynomials $(h_t)_{t\in\C}$, with
$$h_t(x_1,\ldots,x_n)=f(x_1,\ldots,x_{n-1},tx_n)+(1-t)x_n^m.$$
Note that we have 
$$h_0=g(x_1,\ldots,x_{n-1})+x_n^m\quad\text{and}\quad h_1=f.$$
First, the semicontinuity of the Arnold exponent \cite[Theorem~2.11]{Steenbrink} implies that there is a Zariski open neighborhood $U$ of $0\in\C$ such that
$$\sigma_0(h_t)\geq \sigma_0(h_0)=\sigma_0(g)+\frac{1}{m}\quad\text{for all}\quad t\in U,$$
where the equality follows from the Thom-Sebastiani property of the Arnold exponent (see, for example,
\cite[Example~(8.6)]{Malgrange}).
Second, if $m\geq1+\theta_P(f)$, then there is a Zariski open neighborhood $V$ of $1\in\C$ such that for every $t\in V$, the hypersurface defined
by $h_t$ has isolated singularities at $P$ and the Milnor number at $P$ is constant on $V$:
$$\mu_0(h_t)=\mu_0(f).$$
By a result of Varchenko \cite{Varchenko2}, this implies that the Arnold exponent is constant on this open subset:
$$\sigma_0(h_t)=\sigma_0(h_1)=\sigma_0(f).$$
By taking $t_0\in U\cap V$, we conclude that
$$\sigma_0(f)=\sigma_0(h_1)=\sigma_0(h_{t_0})\geq \sigma_0(g)+\frac{1}{m}.$$
By taking $m=1+\lceil\theta_0(f)\rceil$, we get (\ref{eq_Loeser}). 

We take a similar approach towards the proof of Theorem~\ref{thm_main}, but allowing $m$ to be a rational number.
We use a suitable finite cover in order to make sense of the corresponding log canonical threshold and prove
by ad-hoc methods the semicontinuity and the constancy results needed in this case. In doing this we make use
of various results concerning multiplier ideals. For the definition and basic results on multiplier ideals, we refer to \cite[Chapter~9]{Lazarsfeld}.

In order to justify the definition that follows, let us recall the behavior of the log canonical threshold under finite morphisms.
Recall first that if $f\in\cO_X(X)$ is a nonzero regular function on the smooth variety $X$ vanishing at $P$ and  $\lambda$ is a positive
rational number, then
$\lambda<\lct_P(f)$ if and only if the multiplier ideal $\cJ(X,f^{\lambda})$ is equal to $\cO_X$ around $P$. 
Suppose now that
$\pi\colon Y\to X$ is a finite, surjective morphism between smooth complex algebraic varieties and $K_{Y/X}$ is the relative canonical divisor
(this is the effective divisor locally defined by the determinant of the Jacobian matrix of $\pi$). In this case, it follows from the formula
relating  the multiplier ideals $\cJ(X,f^{\lambda})$ and $\cJ\big(Y, (f\circ\pi)^{\lambda}\big)$ (see \cite[Theorem~9.5.42]{Lazarsfeld})
that
$$
\lambda<\lct_P(f)\quad\text{if and only if}\quad\cO_Y(-K_{Y/X})\subseteq\cJ\big(Y, (f\circ \pi)^{\lambda}\big)\,\,\text{around}\,\,\pi^{-1}(P).
$$

\begin{setup}\label{basic_setup}
We will be interested in the following set-up. Suppose that $X=\C^n$, with $n\geq 2$, and $f\in\C[x_1,\ldots,x_n]$
is nonzero and such that $f(0)=0$. We put $g(x_1,\ldots,x_{n-1})=f(x_1,\ldots,x_{n-1},0)$ and assume $g\neq 0$. We also fix a positive rational number 
$\alpha$. 
Given $t\in \C$, we put
$$h_t:=f(x_1,\ldots,x_{n-1},tx_n)+(1-t)x_n^{\alpha}.$$
We don't attach any concrete meaning to $h_t$, but we define the \emph{virtual log canonical threshold} $\vlct_0(h_t)$
of $h_t$ at $0$, as follows.
We consider a positive integer $d$ such that $d\alpha$ is an integer and the finite surjective
morphism $\pi\colon Y=\C^n\to X$ given by $\pi(u_1,\ldots,u_n)=(u_1,\ldots,u_{n-1},u_n^d)$. 
We denote the standard coordinates on $Y$ by $y_1,\ldots,y_n$.
Note that in this case
the divisor $K_{Y/X}$ is defined by $y_n^{d-1}$. While $h_t$ does not make sense by itself, we may and will consider
$$\widetilde{h}_t=\pi^*(h_t):=f(y_1,\ldots,y_{n-1},ty_n^d)+(1-t)y_n^{d\alpha}\in \C[y_1,\ldots,y_n]=\cO_Y(Y).$$ 
Note that 
$\widetilde{h}_t(0)=0$ and
$\widetilde{h}_t\neq 0$ for all $t$: otherwise, by restricting to 
the hyperplane $y_n=0$ we would get $g=0$, contradicting our assumption.
We put
$$\vlct_0(h_t):=\sup\{\lambda>0\mid y_n^{d-1}\in\cJ(Y,\widetilde{h}_t^{\lambda})\,\,\text{around}\,\,0\}.$$
\end{setup}

\begin{rmk}
Recall that if $\lambda'>\lambda$, then $\cJ(Y,\widetilde{h}_t^{\lambda'})\subseteq \cJ(Y, \widetilde{h}_t^{\lambda})$, with equality if $\lambda'-\lambda$
is small enough (depending on $\lambda$).
This implies that 
 $y_n^{d-1}\in\cJ(Y, \widetilde{h}_t^{\lambda})$ around $0$ if and only if $\lambda<\vlct_0(h_t)$.
\end{rmk}

\begin{rmk}\label{rmk_well_defined}
The definition of $\vlct_0(h_t)$ is independent of $d$. For this, it is enough to show that if instead of $d$ we consider $rd$, 
for a positive integer $r$, then the value of $\vlct_0(h_t)$ does not change. Let $\varphi\colon Z=\C^n\to Y$ be the finite, surjective morphism
given by $\varphi(u_1,\ldots,u_n)=(u_1,\dots,u_{n-1},u_n^r)$ and let us denote by $z_1,\ldots,z_n$ the standard coordinates on $Z$.
 Note that 
$(\pi\circ\varphi)^*(h_t)=\widetilde{h}_t\circ\varphi$. Since $K_{Z/Y}$ is defined by $z_n^{r-1}$, it follows from the behavior of multiplier ideals under finite surjective morphisms
(see \cite[Theorem~9.5.42]{Lazarsfeld})
that 
$$y_n^{d-1}\in \cJ(Y,\widetilde{h}_t^{\lambda})\,\,\text{around}\,\,0\quad\text{if and only if}\quad (y_n^{d-1}\circ\varphi)\cdot z_n^{r-1}=z_n^{rd-1}\in 
\cJ\big(Z, (\widetilde{h}_t\circ\varphi)^{\lambda}\big)
\,\,\text{around}\,\,0.$$
This proves our assertion. In particular, we see that if 
$h_t\in\C[x_1,\ldots,x_n]$ (that is, if
$\alpha$ is an integer or if $t=1$), then $\vlct_0(h_t)=\lct_0(h_t)$. 
\end{rmk}

\begin{rmk}\label{bound_by_1}
Note that for every $t$, we have $\vlct_0(h_t)\leq 1$. Indeed, if $\lambda\geq 1$, then $\cJ(Y,\widetilde{h}_t^{\lambda})\subseteq (\widetilde{h}_t)$.
If $y_n^{d-1}\in \cJ(Y,\widetilde{h}_t^{\lambda})$ around $0$, then there are $u,v\in\C[y_1,\ldots,y_n]$, with $u(0)\neq 0$ such that
\begin{equation}\label{eq_bound_by_1}
u\cdot y_n^{d-1}=\widetilde{h}_t\cdot v.
\end{equation}
Note that $y_n$ does not divide $\widetilde{h}_t$: otherwise, by restricting to the hyperplane $y_n=0$ we would get $g=0$,
contradicting our assumption. We deduce from (\ref{eq_bound_by_1}) that $y_n^{d-1}$ divides $v$ and thus $\widetilde{h}_t$ divides $u$,
contradicting the fact that $u(0)\neq 0$.
\end{rmk}

It is clear that 
$$\vlct_0(h_1)=\lct_0(h_1)=\lct_0(f).$$
The next lemma gives the value at $t=0$.

\begin{lem}\label{lem1}
With the notation is Set-up~\ref{basic_setup}, we have
$$\vlct_0(h_0)=\min\left\{\lct_0(g)+\frac{1}{\alpha},1\right\}.$$
\end{lem}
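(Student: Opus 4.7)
The plan is to compute $\vlct_0(h_0)$ directly by checking when $y_n^{d-1}$ lies in $\cJ(Y, \widetilde{h}_0^\lambda)$ on an explicit log resolution of $\widetilde{h}_0 = g + y_n^{d\alpha}$. By Remark~\ref{bound_by_1} we already know $\vlct_0(h_0) \leq 1$, so the substantive claim is that $\vlct_0(h_0) = \lct_0(g) + 1/\alpha$ whenever this quantity does not exceed $1$.

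Thanks to the Thom--Sebastiani shape of $\widetilde{h}_0$, a log resolution can be built in two stages. First, I would take a log resolution $\mu \colon V' \to V = \A^{n-1}$ of the pair $(V,g)$, fine enough to contain a divisor $E_0$ (centered at $0$) computing $\lct_0(g)$. The induced map $\nu = \mu \times \id \colon V' \times \A^1 \to Y$ is not yet a log resolution of $\widetilde{h}_0$: at a generic point of $E \times \{y_n = 0\}$, for an exceptional $E \subset V'$ with $a := \ord_E(g) > 0$, the pullback reads $\widetilde{h}_0 = t^a u + y_n^{d\alpha}$ in local coordinates $(t, y_n, \ldots)$ with $u$ a unit --- a Brieskorn--Pham singularity. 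Each such piece can be resolved by the standard toric (weighted blow-up) resolution of $t^a + y_n^{d\alpha}$, producing $\Pi \colon Z \to Y$.

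With this $Z$ in hand, the condition ``$y_n^{d-1} \in \cJ(Y, \widetilde{h}_0^\lambda)$ around $0$'' translates by the valuative criterion into the family of inequalities
$$ (d-1)\,\ord_F(y_n) + \ord_F(K_{Z/Y}) \geq \lfloor \lambda\,\ord_F(\widetilde{h}_0) \rfloor, $$
one for each prime divisor $F$ on $Z$ whose image in $Y$ contains $0$. A direct case-by-case check handles the strict transforms: those of $\{y_n=0\}$ and of each $E \times \A^1$ impose no restriction (in both cases $\ord_F(\widetilde{h}_0) = 0$, using $g \neq 0$ respectively that $y_n^{d\alpha}$ is nonzero generically on $E \times \A^1$), while the strict transform of $\{\widetilde{h}_0 = 0\}$ gives exactly the cap $\lambda < 1$. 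The real content lies in the exceptional divisors $F_{q,p}$ of $\Pi$ above $E \times \{0\}$, parametrised by the toric weights $(q,p) = (\ord_{F_{q,p}}(t),\,\ord_{F_{q,p}}(y_n))$. For these, standard computations give log discrepancy $q\,A_E(V) + p$ relative to $Y$ (the extra $(A_E(V)-1)\cdot q$ coming from $K_{V' \times \A^1/Y}$), and $\ord_{F_{q,p}}(\widetilde{h}_0) = \min(qa,\,p\,d\alpha)$; the inequality rearranges to $\lambda < (dp + q\,A_E(V))/\min(qa,\,p\,d\alpha)$.

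An elementary optimisation shows this ratio is minimised on the ``balanced'' ray $qa = p\,d\alpha$ (which is always part of the Newton subdivision of $t^a + y_n^{d\alpha}$), with minimum value $A_E(V)/a + 1/\alpha$; taking the infimum over $E$ --- valid because $E_0$ was chosen in $V'$ --- gives $\lct_0(g) + 1/\alpha$. Combined with the cap $\lambda < 1$ this yields the lower bound $\vlct_0(h_0) \geq \min\{\lct_0(g) + 1/\alpha,\, 1\}$, and the reverse inequality follows from Remark~\ref{bound_by_1} together with the specific divisor $F_{d\alpha,\,a}$ above $E_0 \times \{0\}$ witnessing the ratio $\lct_0(g) + 1/\alpha$. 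The main technical obstacle will be the toric bookkeeping in the case $\gcd(a, d\alpha) > 1$, where the weighted blow-up introduces a cyclic quotient singularity requiring further (standard) toric refinement; one needs to confirm the additional divisors lie in the same Newton fan and thus do not tighten the optimisation.
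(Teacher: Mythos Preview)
Your approach is valid in outline and genuinely different from the paper's. The paper never builds a log resolution of $\widetilde h_0$: instead it first shows, for $\lambda<1$, that $\cJ(Y,\widetilde h_0^{\lambda})=\cJ(Y,\fra^{\lambda})$ for the ideal $\fra=(g,y_n^{d\alpha})$, using the generic-linear-combination principle \cite[Proposition~9.2.28]{Lazarsfeld} together with the $\C^*$-action rescaling $y_n$; it then applies the Summation theorem of \cite{Takagi,JM} and the product formula in separate variables to obtain
\[
\cJ(Y,\widetilde h_0^{\lambda})=\sum_{\beta+\gamma=\lambda}\cJ(\C^{n-1},g^{\beta})\cdot y_n^{\lfloor d\alpha\gamma\rfloor}\cdot\C[y_1,\ldots,y_n],
\]
from which the membership of $y_n^{d-1}$ is read off by elementary manipulation. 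This is shorter and uses only structural properties of multiplier ideals. Your route trades these black boxes for explicit toric combinatorics, which is more elementary but longer.

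There is, however, a genuine gap in your construction of $Z$, and it is not the one you flag. You describe the second-stage resolution only at a \emph{generic} point of each $E\times\{y_n=0\}$, where indeed $\nu^*\widetilde h_0=t^a u+y_n^{d\alpha}$. But at a closed point of $V'$ where several exceptional divisors $E_1,\ldots,E_k$ of $\mu$ meet, one has locally $\nu^*\widetilde h_0=t_1^{a_1}\cdots t_k^{a_k}u+y_n^{d\alpha}$, and the two-variable weighted blow-ups along the individual $E_i\times\{0\}$ do not resolve this; a $(k{+}1)$-dimensional toric subdivision is required, producing further exceptional divisors $F$ with $\ord_F(t_i)=q_i$, $\ord_F(y_n)=p$. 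Since your lower bound $\vlct_0(h_0)\geq\min\{\lct_0(g)+1/\alpha,1\}$ needs the valuative inequality for \emph{every} divisor on a log resolution, these divisors must be checked. Fortunately your optimisation extends: one computes $A_Y(F)=\sum_i q_i A_{E_i}(V)+p$ and $\ord_F(\widetilde h_0)=\min\big(\sum_i q_i a_i,\,p\,d\alpha\big)$, and then the mediant inequality $\big(\sum_i q_i A_{E_i}\big)/\big(\sum_i q_i a_i\big)\geq \min_i A_{E_i}/a_i\geq \lct_0(g)$ feeds into the same two-case analysis to give the bound $\lct_0(g)+1/\alpha$. So your argument can be completed, but the issue to address is the deeper strata of $\Exc(\mu)$, not merely the cyclic-quotient refinement when $\gcd(a,d\alpha)>1$.
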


\begin{proof}
Since $\vlct_0(h_0)\leq 1$ by Remark~\ref{bound_by_1}, in order to prove the equality in the lemma it is enough to show that
for every $\lambda<1$, we have
\begin{equation}\label{eq1_lem1}
\vlct_0(h_0)>\lambda\quad\text{if and only if}\quad \lct_0(g)+\frac{1}{\alpha}>\lambda.
\end{equation}
By definition, we have
$\vlct_0(h_0)>\lambda$ if and only if $y_n^{d-1}\in \cJ(Y,\widetilde{h}_0^{\lambda})$ in a neighborhood of $0$. 
Recall that $\widetilde{h}_0=g(y_1,\ldots,y_{n-1})+y_n^{d\alpha}$. 
We claim that if $\fra$ is the ideal generated by $g$ and $y_n^{d\alpha}$, then 
\begin{equation}\label{eq2_lem1}
\cJ(Y,\fra^{\lambda})=\cJ(Y, \widetilde{h}_0^{\lambda}).
\end{equation}
In order to see this, note first that since $\lambda<1$, it follows from \cite[Proposition~9.2.28]{Lazarsfeld}
that 
$$
\cJ(Y,\fra^{\lambda})=\cJ\big(Y,(c_1g+c_2 y_n^{d\alpha})^{\lambda}\big)
$$
for general elements $c_1, c_2\in\C$. In particular, $c_1$ and $c_2$ are nonzero, and it is clear that we may assume that
$c_1=1$. If we consider the action of $\C^*$ on $\C^n$  given by rescaling the last coordinate, it is clear that $\fra$ is preserved by this action,
hence $\cJ(Y,\fra^{\lambda})$ is preserved as well. Since we can rescale $g+c_2 y_n^{d\alpha}$ to get $g+y_n^{d\alpha}$, we obtain our claim. 

We now use the Summation theorem for multiplier ideals in the form given in \cite{Takagi} (see also \cite{JM}). 
This gives
\begin{equation}\label{eq3_lem1}
\cJ(Y,\fra^{\lambda})=\sum_{\beta+\gamma=\lambda}\cJ(Y,g^{\beta}y_n^{d\alpha\gamma}),
\end{equation}
where the sum is over all nonnegative rational numbers $\beta,\gamma$, with $\beta+\gamma=\lambda$
(note that in the sum there are only finitely many distinct terms).
On the other hand, since $g$ only involves the first $n-1$ variables and since
$\cJ(\C,y_n^{d\alpha\gamma})=(y_n^{\lfloor d\alpha\gamma\rfloor})$, we have by \cite[Proposition~9.5.22]{Lazarsfeld} 
\begin{equation}\label{eq4_lem1}
\cJ(Y,g^{\beta}y_n^{d\alpha\gamma})=\cJ(\C^{n-1},g^{\beta})\cdot \cJ(\C, y_n^{d\alpha\gamma})\cdot \C[y_1,\ldots,y_n]
\end{equation}
$$
=\cJ(\C^{n-1},g^{\beta})y_n^{\lfloor d\alpha\gamma\rfloor}\cdot \C[y_1,\ldots,y_n].
$$
We thus conclude using (\ref{eq2_lem1}), (\ref{eq3_lem1}), and (\ref{eq4_lem1}) that
\begin{equation}\label{eq5_lem1}
\cJ(Y,\widetilde{h}_0^{\lambda})=\sum_{\beta+\gamma=\lambda}\cJ(\C^{n-1},g^{\beta})y_n^{\lfloor d\alpha\gamma\rfloor}\cdot \C[y_1,\ldots,y_n].
\end{equation}

We next claim that this implies that $y_n^{d-1}\in \cJ(Y,\widetilde{h}_0^{\lambda})$ around $0$ if and only if there is $\beta$, with $0\leq\beta\leq\lambda$, 
such that $\beta<\lct_0(g)$ and $\lfloor d\alpha(\lambda-\beta)\rfloor\leq d-1$. Indeed, the fact that this condition is sufficient follows immediately from (\ref{eq5_lem1}).
Conversely, if  $y_n^{d-1}\in \cJ(Y,\widetilde{h}_0^{\lambda})$ around $0$, then it follows from (\ref{eq5_lem1}) that we can write
\begin{equation}\label{eq6_lem1}
q(y_1,\ldots,y_n)\cdot y_n^{d-1}=\sum_{i=1}^ru_i(y_1,\ldots,y_n)p_i(y_1,\ldots,y_{n-1})y_n^{\gamma_i},
\end{equation}
with $q(0)\neq 0$, $u_i\in\C[y_1,\ldots,y_n]$, $p_i\in\cJ(\C^{n-1},g^{\beta_i})$, and $\gamma_i\geq \lfloor d\alpha(\lambda-\beta_i)\rfloor$. After expanding the $u_i$ according to the powers of $y_n$,
it follows that after replacing $r$ by a suitable larger integer, we may and will assume that $u_i=1$ for all $i$. 
By considering the order of vanishing along $y_n=0$, we see that we may assume that $\gamma_i\geq d-1$ for all $i$. Dividing by $y_n^{d-1}$
and evaluating at $0$, we conclude that there is $i$ such that $p_i(0)\neq 0$ and $\gamma_i=d-1$. Therefore $\beta_i<\lct_0(g)$ and 
$d-1\geq \lfloor d\alpha(\lambda-\beta_i)\rfloor$. This completes the proof of our claim.

We thus see that $\lambda<\vlct_0(h_0)$ if and only if there is $\beta$, with $0\leq\beta\leq\lambda$ such that
$\beta<\lct_0(g)$ and $d-1\geq \lfloor d\alpha(\lambda-\beta)\rfloor$. Note that $d-1\geq \lfloor d\alpha(\lambda-\beta)\rfloor$  if and only if 
$d>d\alpha(\lambda-\beta)$, which is equivalent to $\lambda<\beta+\frac{1}{\alpha}$. Therefore
$\lambda<\vlct_0(h_0)$ if and only if $\lambda<\lct_0(g)+\frac{1}{\alpha}$, which completes the proof of the lemma.
\end{proof}

\begin{lem}\label{lem2}
With the notation in Set-up~\ref{basic_setup}, there is a Zariski open neighborhood $U$ of $0\in\C$ such that
$$\vlct_0(h_t)\geq\vlct_0(h_0)\quad\text{for every}\quad t\in U.$$
\end{lem}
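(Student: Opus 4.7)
The plan is to obtain a semicontinuity result for $\vlct_0$ via a common log resolution of the total family on $\A^1\times Y$. Regard $F:=\widetilde h\in\C[t,y_1,\ldots,y_n]$ as a regular function on $\A^1\times Y$ with $F|_{\{t\}\times Y}=\widetilde h_t$, and take a log resolution $\mu\colon Z\to\A^1\times Y$ of the divisor $F\cdot y_n\cdot t$. Write
$$\mu^*F=\sum_\alpha a_\alpha E_\alpha,\quad \mu^*y_n=\sum_\alpha b_\alpha E_\alpha,\quad K_{Z/(\A^1\times Y)}=\sum_\alpha k_\alpha E_\alpha,\quad \mu^*t=\sum_\alpha c_\alpha E_\alpha.$$
Call $E_\alpha$ horizontal if $c_\alpha=0$ and vertical if $c_\alpha>0$; among the verticals, $D_0$ denotes the strict transform of $\{t=0\}$, which has $c_{D_0}=1$ and induces a birational morphism $\mu|_{D_0}\colon D_0\to Y$.

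There is a Zariski open subset $U_0\subseteq\A^1$ with finite complement such that for every $t\in U_0$ the fiber $Z_t:=\mu^{-1}(\{t\}\times Y)$ is smooth, disjoint from the vertical divisors, and $\mu|_{Z_t}$ is a log resolution of $(Y,\widetilde h_t\cdot y_n)$ whose divisors $E_\alpha\cap Z_t$ (for horizontal $\alpha$) carry the same numerical data $(a_\alpha,b_\alpha,k_\alpha)$. Hence for $t\in U_0$
$$\vlct_0(h_t)=\min_\alpha\frac{k_\alpha+1+(d-1)b_\alpha}{a_\alpha}=:c,$$
where the minimum is over horizontal $E_\alpha$ centered over $(t,0)$; the set of relevant $\alpha$'s is independent of $t\in U_0$, so this value $c$ is a constant. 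The Zariski open set $U:=U_0\cup\{0\}$ is a neighborhood of $0$, and it suffices to verify $\vlct_0(h_0)\le c$.

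To do so, I would use the valuative characterization $\vlct_0(h_0)=\inf_w\frac{A_Y(w)+(d-1)w(y_n)}{w(\widetilde h_0)}$ over divisorial valuations $w$ on $Y$ centered at $0$. For each horizontal $E_\alpha$ contributing to the minimum, arrange the log resolution so that $E_\alpha\cap D_0$ is a nonempty smooth Cartier divisor on $D_0$; via $\mu|_{D_0}$ it defines a divisorial valuation $w_\alpha$ on $Y$ centered at $0$. The adjunction formula
$$K_{D_0/Y}=\bigl(K_{Z/(\A^1\times Y)}+D_0\bigr)\bigm|_{D_0}$$
(which uses $K_{\A^1\times Y/Y}=0$) combined with the triviality of the normal bundle of $\{t=0\}$ in $\A^1\times Y$---which forces $D_0|_{D_0}$ to be supported on the intersections with the remaining vertical divisors, and hence to contribute $0$ along a generic point of $E_\alpha\cap D_0$---then yields $A_Y(w_\alpha)=k_\alpha+1$, $w_\alpha(y_n)=b_\alpha$, and $w_\alpha(\widetilde h_0)=a_\alpha$. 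The valuative characterization gives $\vlct_0(h_0)\le(k_\alpha+1+(d-1)b_\alpha)/a_\alpha$ for every such $\alpha$, hence $\vlct_0(h_0)\le c$.

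The main obstacle is the last computation: (i) arranging the log resolution so that every relevant horizontal $E_\alpha$ meets $D_0$ in a locus lying over $(0,0)\in\A^1\times Y$, rather than having its preimage of $(0,0)$ concentrated in the exceptional vertical divisors $D_j$ ($j\neq 0$); and (ii) carrying out the adjunction on $D_0\subset Z$ to read off all three numerical invariants of $w_\alpha$ unchanged from those of $E_\alpha$. The first is a standard resolution-of-singularities manipulation (further blowing up $E_\alpha\cap D_j$ until $D_0$ dominates the relevant intersections), while the second rests crucially on the fact that the normal bundle of the smooth Cartier divisor $\{t=0\}$ in the product $\A^1\times Y$ is trivial.
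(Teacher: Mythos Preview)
Your approach is genuinely different from the paper's. The paper replaces $\widetilde h_t$ by the ideal $\fra=\big(f(y_1,\ldots,y_{n-1},ty_n^d),\,y_n^{d\alpha}\big)$ on $Y\times\A^1$, exploits that $\fra$ (and hence its multiplier ideal $J$) is homogeneous for the grading with $\deg t=-d$, $\deg y_n=1$, and uses this homogeneity together with the Restriction Theorem to pass membership of $y_n^{d-1}$ from $J_0$ to $J_{t_0}$ for every $t_0$. Your route---a common log resolution of the total family and specialization of the minimizing horizontal divisor to the strict transform $D_0$ of the central fibre---avoids the auxiliary ideal and the grading, and the adjunction computation you outline in (ii) is correct: along a component $C$ of $E_\alpha\cap D_0$ one does get $A_Y(w_\alpha)=k_\alpha+1$, $w_\alpha(y_n)=b_\alpha$, $w_\alpha(\widetilde h_0)=a_\alpha$, because the SNC condition forces only $E_\alpha$ and $D_0$ through the generic point of $C$, and $D_0|_{D_0}=-\sum_{j\neq 0}c_jD_j|_{D_0}$ vanishes there.

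The genuine gap is your fix for (i). Blowing up $E_\alpha\cap D_j$ does \emph{not} force $E_\alpha$ to meet $D_0$. The new exceptional divisor $E'$ has $c_{E'}=c_\alpha+c_{D_j}=c_{D_j}>0$, so $E'$ is again vertical; after the blow-up the strict transform of $E_\alpha$ meets $E'$ instead of $D_j$, while $D_0$ is untouched (it was disjoint from the center). Iterating only produces a longer chain of vertical divisors between $E_\alpha$ and $D_0$ without ever creating the desired intersection. A toy model already shows this: blow up $(0,0)\in\A^1_t\times\A^1_y$, take $E_\alpha$ to be the strict transform of $\{y=0\}$, and observe that no sequence of blow-ups at points of $E_\alpha$ inside vertical components will make $E_\alpha$ meet the strict transform of $\{t=0\}$. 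Thus you cannot in general produce a divisorial valuation on $Y$ with the numerics $(k_\alpha,b_\alpha,a_\alpha)$ by restriction to $D_0$, and the inequality $\vlct_0(h_0)\le c$ remains unproved. One can rescue the strategy, but it requires a genuinely different input---either a specialization/limit argument in the space of valuations, or reverting to the Restriction Theorem for multiplier ideals of the family (which is essentially what the paper does, with the homogeneity trick supplying the step that Restriction alone cannot).
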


\begin{proof}
It is standard to see, using the generic behavior of multiplier ideals in families (see \cite[Theorem~9.5.35]{Lazarsfeld} 
and its proof) and the discreteness of the jumping numbers for the multiplier ideals of a given ideal
(see \cite[Lemma~9.3.21]{Lazarsfeld} and its proof)
that the set $\{\vlct_0(h_t)\mid t\in\C\}$ is finite. 
 It follows that in order to prove the lemma,
it is enough to show that if $\lambda<\vlct_0({h}_0)$, then there is a Zariski open subset $U\subseteq\C$ such that 
$\vlct_0({h}_t)>\lambda$ for all $t\in U$. Note that $\lambda<1$ by Remark~\ref{bound_by_1}.

Given an ideal $I$ in $S:=\C[y_1,\ldots,y_n,t]$ and $t_0\in\C$, we denote by $I_{t_0}$ the restriction of $I$ to the hyperplane
$t=t_0$, identified with $Y$ in the obvious way. 
Let us consider the ideal 
$$\fra=\big(f(y_1,\ldots,y_{n-1},ty_n^d), y_n^{d\alpha}\big)\subseteq S$$
and let $J=\cJ(Y\times\C, \fra^{\lambda})$. If we consider on $S$ the grading given by ${\rm deg}(t)=-d$, ${\rm deg}(y_n)=1$, and 
${\rm deg}(y_i)=0$ for $1\leq i\leq n-1$, then $\fra$ is a homogeneous ideal. This implies that $J$ is homogeneous as well.

Note that if $t_0\neq 0$ and if $s_0\in\C$ is such that $s_0^d=t_0$, then after putting $w=s_0y_n$, we can write
$$f(y_1,\ldots,y_{n-1},t_0y_n^d)+(1-t_0)y_n^{d\alpha}=f(y_1,\ldots,y_{n-1},w^d)+\frac{1-s_0^d}{s_0^{d\alpha}}w^{d\alpha}.$$
Since $\lambda<1$, it follows from  \cite[Proposition~9.2.28]{Lazarsfeld} that for $s_0\in\C$ general, we have
$$\cJ\big(Y, (f(y_1,\ldots,y_{n-1},w^d)+\frac{1-s_0^d}{s_0^{d\alpha}}w^{d\alpha})^{\lambda}\big)
=\cJ(Y, (f(y_1,\ldots,y_{n-1},w^d), w^{d\alpha})^{\lambda}\big).$$
We thus conclude that for $t_0\in\C$ general, we have
$$\cJ(Y, \widetilde{h}_{t_0}^{\lambda})=\cJ(Y,\fra_{t_0}^{\lambda}).$$
Furthermore, we deduce from the theorem regarding the generic behavior of multiplier ideals in families
(see \cite[Theorem~9.5.35]{Lazarsfeld}) that if $t_0\in\C$ is general, then 
$\cJ(Y,\fra_{t_0}^{\lambda})=J_{t_0}$.

Since $\lambda<\vlct_0(h_0)$, it follows that $y_n^{d-1}$ lies in $\cJ(Y,\widetilde{h}_0^{\lambda})$ in a neighborhood of $0$.
Therefore we can find $p\in\C[y_1,\ldots,y_n]$, with $p(0)\neq 0$, such that $p\cdot y_n^{d-1}\in \cJ(Y,\widetilde{h}_0^{\lambda})$.
Note that we have 
$$\cJ(Y,\widetilde{h}_0^{\lambda})\subseteq \cJ(Y,\fra_0^{\lambda})\subseteq J_0,$$
where the first inclusion follows from the fact that $\widetilde{h}_0\in\fra_0$ and the second inclusion
follows from the Restriction Theorem for multiplier ideals (see \cite[Example~9.5.4]{Lazarsfeld}). 
We thus conclude that there are $a_1,\ldots,a_r\in\C[y_1,\ldots,y_n]$
such that
$$q:=p\cdot y_n^{d-1}+\sum_{i=1}^rt^ia_i(y_1,\ldots,y_n)\in J.$$
By only considering the component of $q$ of degree $d-1$ (with respect to the grading that we defined on $S$), 
we may assume that $p\in\C[y_1,\ldots,y_{n-1}]$
and that $a_i=y_n^{id+d-1}b_i(y_1,\ldots,y_{n-1})$ for $1\leq i\leq r$. We can thus write
$$q=y_n^{d-1}\cdot q_1(y_1,\ldots,y_n,t)$$
such that for every $t_0\in\C$, we have $q_1(0,t_0)\neq 0$. 
We then deduce that 
$y_n^{d-1}\in J_{t_0}$ in a neighborhood of $0$. Since for $t_0$ general we have
$J_{t_0}=\cJ(Y,\widetilde{h}_{t_0}^{\lambda})$, we conclude that there
is a Zariski open subset $U$ of $\C$ such that for all $t_0\in U$, we have
$\vlct_0(h_{t_0})>\lambda$. This completes the proof of the lemma.
\end{proof}

\begin{lem}\label{lem2.5}
With the notation in Set-up~\ref{basic_setup}, 
if $m$ is a nonnegative integer such that $m\geq d\cdot \theta_0(f)$, then $y_n^m$ lies in the integral closure
of $J_f\cdot\cO_{Y,0}$. 
\end{lem}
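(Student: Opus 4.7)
The plan is to deduce the conclusion directly from the characterization (\ref{eq3_theta}) of $\theta_0(f)$ in terms of integral closures of powers of $J_f$, combined with two standard properties of integral closure: its compatibility with ring homomorphisms, and the equivalence ``$h^k \in \overline{\fra^k}$ if and only if $h \in \overline{\fra}$''.

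First I would rewrite the hypothesis $m \geq d\,\theta_0(f)$ as $m/d \geq \theta_0(f)$. The degenerate case $m = 0$ forces $\theta_0(f) = 0$, so $f$ is smooth at $0$, $J_f \cdot \cO_Y$ is the unit ideal around $0$, and there is nothing to prove. Otherwise, applying (\ref{eq3_theta}) with the positive integers $r = m$ and $s = d$ yields
\[
\frm_0^m \subseteq \overline{J_f^d} \quad \text{in a neighborhood of } 0 \in X.
\]

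Next I would pull this containment back along $\pi$. An integral relation $h^N + c_1 h^{N-1} + \cdots + c_N = 0$ with $c_i \in \fra^i$ remains an integral relation after any ring homomorphism, so $\overline{\fra} \cdot \cO_Y \subseteq \overline{\fra \cdot \cO_Y}$ for every ideal $\fra$ on $X$; combined with the identity $(\fra \cdot \cO_Y)^k = \fra^k \cdot \cO_Y$, the previous display becomes
\[
(\frm_0 \cdot \cO_Y)^m \subseteq \overline{(J_f \cdot \cO_Y)^d} \quad \text{in a neighborhood of } 0 \in Y,
\]
where I also used that $\pi^{-1}(0) = \{0\}$.

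To finish, I would observe that the extra factor of $d$ in the exponent is absorbed by $y_n^d = \pi^*(x_n) \in \frm_0 \cdot \cO_Y$: raising it to the $m$-th power gives $(y_n^m)^d = y_n^{md} \in (\frm_0 \cdot \cO_Y)^m \subseteq \overline{(J_f \cdot \cO_Y)^d}$, and the implication $h^d \in \overline{\fra^d} \Rightarrow h \in \overline{\fra}$ (immediate from the valuative criterion for integral closure) then delivers $y_n^m \in \overline{J_f \cdot \cO_{Y,0}}$. I do not foresee a genuine obstacle: the argument is essentially a packaging of (\ref{eq3_theta}) through the cyclic cover $\pi$, with the only bookkeeping point being that $\frm_0 \cdot \cO_Y$ sees $y_n$ only through its $d$-th power, which is exactly what matches the hypothesis $m \geq d\,\theta_0(f)$.
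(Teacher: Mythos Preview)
Your argument is correct and genuinely different from the paper's. The paper works directly with the valuative criterion on $Y$: for each divisorial valuation $\ord_E$ on $Y$ centered at $0$, it produces a divisorial valuation $\ord_F$ on $X$ with $\ord_E\circ\psi=q\cdot\ord_F$ (invoking a transcendence-degree argument from \cite{ZS} and \cite{KM}), and then compares $m\cdot\ord_E(y_n)$ with $\ord_E(J_f\cdot\cO_Y)$ via the definition (\ref{eq1_theta}) of $\theta_0(f)$. You instead invoke the already-established characterization (\ref{eq3_theta}) to obtain $\frm_0^m\subseteq\overline{J_f^d}$ on $X$, push this containment through $\pi$ by functoriality of integral closure, and then cancel the exponent $d$ using the elementary fact $h^d\in\overline{\fra^d}\Rightarrow h\in\overline{\fra}$. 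Your route is shorter and avoids the external citations on valuation theory; the paper's route, on the other hand, makes the valuation-by-valuation comparison explicit, which is conceptually closer to how $\theta_0(f)$ is defined and could in principle yield finer information about which valuations realize equality.
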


\begin{proof}
Let $\psi\colon \C[x_1,\ldots,x_n]\to \C[y_1,\ldots,y_n]$ be the $\C$-algebra homomorphism corresponding to $\pi$. 
It follows from the characterization of integral closure in terms of divisorial valuations (see \cite[Example~9.6.8]{Lazarsfeld}) that it is enough to show that if
$E$ is a prime divisor on a normal variety $\widetilde{Y}$, with a proper, birational morphism $\widetilde{Y}\to Y$, such that $E$ lies over $0$,
and if ${\rm ord}_E$ is the corresponding valuation, then 
$$m\cdot {\rm ord}_E(y_n)\geq \min_i{\rm ord}_E\big(\psi(\partial f/\partial x_i)\big).$$
Note that there is a prime divisor $F$ on a normal variety $\widetilde{X}$, with a proper, birational morphism 
$\widetilde{X}\to X$, such that ${\rm ord}_E\circ\psi$ is equal to $q\cdot {\rm ord}_F$ on $\C[x_1,\ldots,x_n]$, for some positive integer $q$. 
Indeed, this is equivalent with the fact that if $A\subseteq \C(x_1,\ldots,x_n)$ is the valuation ring corresponding to the restriction of
${\rm ord}_E$, then the residue field of $A$ over $\C$ has transcendence degree $n-1$ (see \cite[Lemma~2.45]{KM}). 
If $B\subseteq \C(y_1,\ldots,y_n)$ is the valuation ring of ${\rm ord}_E$, then $A=B\cap\C(x_1,\ldots,x_n)$
and the assertion follows from
\cite[Chapter~VI.6, Corollary~1]{ZS}.
We note that $F$ lies over $0\in X$. 

By definition of $\theta_0(f)$, we have
$$\min_i{\rm ord}_F(\partial f/\partial x_i)\leq\theta_0(f)\cdot\min_i{\rm ord}_F(x_i)\leq\theta_0(f)\cdot {\rm ord}_F(x_n).$$
On the other hand, we have 
$${\rm ord}_E\big(\psi(\partial f/\partial x_i)\big)=q\cdot {\rm ord}_F(\partial f/\partial x_i)$$
and 
$${\rm ord}_E(y_n)=\frac{1}{d}{\rm ord}_E\big(\psi(x_n)\big)=\frac{q}{d}{\rm ord}_F(x_n).$$
Since $m\geq d\cdot\theta_0(f)$, we conclude that
$$m\cdot {\rm ord}_E(y_n)\geq d\cdot\theta_0(f)\cdot {\rm ord}_E(y_n)=q\cdot\theta_0(f)\cdot {\rm ord}_F(x_n)$$
$$\geq q\cdot \min_i{\rm ord}_F
(\partial f/\partial x_i)=\min_i{\rm ord}_E\big(\psi(\partial f/\partial x_i)\big).$$
This completes the proof.
\end{proof}

The following general result is well-known, but we include a proof for the benefit of the reader.
We consider a smooth morphism $\varphi\colon {\mathcal X}\to T$ between complex algebraic varieties
and a regular function $f$ on ${\mathcal X}$. For $t\in T$, we denote by ${\mathcal X}_t$ the fiber 
$\varphi^{-1}(t)$ and by $f_t$ the restriction $f\vert_{{\mathcal X}_t}$.

\begin{prop}\label{semicont_Milnor_number}
With the above notation, suppose that $s\colon T\to {\mathcal X}$ is such that $\varphi\circ s={\rm id}_T$ 
and $f\vert_{s(T)}=0$. 
Let $t_0\in T$ be such that
$f_{t_0}$  is nonzero and has an isolated singularity\footnote{In this proposition we include in ``isolated singularity" the possibility that the point might be a smooth point.} at $s(t_0)$.
\begin{enumerate}
\item[i)] There is a Zariski open neighborhood $V\subseteq T$ of $t_0$ such that for every $t\in V$, $f_t$ is nonzero,
has an isolated singularity at $s(t)$, and
\begin{equation}\label{eq1_semicont_Milnor_number}
\mu_{s(t)}(f_t)\leq \mu_{s(t_0)}(f_{t_0}).
\end{equation}
\item[ii)] If $V$ is as in i) and such that the inequality in (\ref{eq1_semicont_Milnor_number}) is an equality for all $t\in V$,
then there is an open neighborhood $U$ of $s(V)$ in $\varphi^{-1}(V)$ such that for every $t\in V$, the singular locus of $f_t$
in $U\cap {\mathcal X}_t$ is contained in $\{s(t)\}$. 
\end{enumerate}
\end{prop}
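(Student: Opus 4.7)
The natural approach is to introduce the relative Jacobian ideal $J_{f/T}$, defined locally by the partials of $f$ with respect to smooth fiber coordinates of $\varphi$, and to study the relative critical scheme $Z:=V(J_{f/T})\subseteq\mathcal{X}$. By definition $\mu_p(f_t)$ equals the length of $\mathcal{O}_{\mathcal{X}_t,p}/J_{f_t}$, and $J_{f_t}$ is the pullback of $J_{f/T}$ to the fiber $\mathcal{X}_t$. The isolated-singularity hypothesis at $s(t_0)$ says precisely that $s(t_0)$ is isolated in $Z_{t_0}$, with local length equal to $\mu_{s(t_0)}(f_{t_0})$. The plan is to find a neighborhood $B$ of $s(t_0)$ in $\mathcal{X}$ and a neighborhood $V_0\ni t_0$ in $T$ such that $\varphi|_{Z\cap B}\colon Z\cap B\to V_0$ is finite. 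In the analytic setting this is standard: choose a small ball $B$ around $s(t_0)$ with $\partial B\cap Z_{t_0}=\emptyset$; since $\partial B\cap Z$ is closed and its image in $T$ does not contain $t_0$, after shrinking $V_0$ we obtain $\partial B\cap Z_t=\emptyset$ for all $t\in V_0$, so $\varphi|_{Z\cap B}$ is proper, and combined with the upper semicontinuity of fiber dimension this forces it to be finite.

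Once finiteness is in hand, $\pi_{*}\mathcal{O}_{Z\cap B}$ is a coherent sheaf on $V_0$, and upper semicontinuity of the dimension of its fibers yields
$$\sum_{p\in Z_t\cap B}\mu_p(f_t)\ \leq\ \mu_{s(t_0)}(f_{t_0})$$
for every $t$ in some open neighborhood $V\subseteq V_0$ of $t_0$. Each $\mu_p(f_t)$ in this sum is therefore finite, so every critical point of $f_t$ lying in $B$ is automatically isolated. Shrinking $V$ so that $s(V)\subseteq B$, either $s(t)\in Z_t\cap B$, in which case $\mu_{s(t)}(f_t)$ is one of the summands and is thus bounded by $\mu_{s(t_0)}(f_{t_0})$, or $s(t)\notin Z_t$, in which case $s(t)$ is a smooth point of $f_t$ and the bound is trivial. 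The nonvanishing of $f_t$ near $t_0$ follows from the openness of $\{f\neq 0\}\subseteq\mathcal{X}$ together with the openness of the smooth morphism $\varphi$ restricted to this set. This finishes part~(i).

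For part~(ii), assume equality in \eqref{eq1_semicont_Milnor_number} throughout $V$, and arrange $s(V)\subseteq B$ as above. If $\mu_{s(t_0)}(f_{t_0})>0$, then for each $t\in V$ the point $s(t)$ lies in $Z_t\cap B$ and contributes the full value $\mu_{s(t)}(f_t)=\mu_{s(t_0)}(f_{t_0})$ to the displayed sum; the remaining summands must therefore vanish, and since $\mu_p(f_t)>0$ whenever $p\in Z_t$, this forces $Z_t\cap B=\{s(t)\}$. If instead $\mu_{s(t_0)}(f_{t_0})=0$, then $s(t_0)$ is a smooth point of $f_{t_0}$, i.e.\ $d_{\mathcal{X}/T}f$ is nonzero at $s(t_0)$; after shrinking $B$ we may assume it is nonzero throughout $B$, so $Z\cap B=\emptyset$. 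In either case $U:=B\cap\varphi^{-1}(V)$ is an open neighborhood of $s(V)$ with $\mathrm{Sing}(f_t)\cap U\cap\mathcal{X}_t\subseteq\{s(t)\}$ for every $t\in V$.

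The main obstacle is the initial step: arranging the finiteness of $\varphi|_{Z\cap B}$. This is where the isolated-singularity hypothesis enters in an essential way, and it requires combining the Chevalley-type upper semicontinuity of fiber dimension with a properness argument obtained by shrinking to a small ball around $s(t_0)$; once this is set up, the rest of the argument reduces to standard properties of coherent sheaves in families.
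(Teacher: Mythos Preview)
Your strategy coincides with the paper's: both introduce the relative critical scheme $Z=V(J_{f/T})$, arrange that a piece of it near the section is finite over the base, and then deduce the Milnor-number bound from upper semicontinuity of the fiber length of the pushed-forward structure sheaf. The difference lies in how finiteness is obtained. The paper works entirely in the Zariski topology: after shrinking $T$ (via Chevalley) so that $s(t)$ is isolated in $\cZ_t$ for every $t$, it takes the scheme-theoretic closure $\cY$ of $\cZ$ away from the other irreducible components, observes that $\cY\to T$ is bijective on reduced schemes and hence finite, and applies semicontinuity to the algebraic coherent sheaf $(\psi_0)_*\cO_{\cY}$. For part~(ii), constancy of the fiber length forces flatness of $\cY$ over $V$ and then $\cZ=\cY$ in a Zariski neighborhood of $s(V)$.

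Your implementation, by contrast, obtains finiteness by intersecting $Z$ with a small Euclidean ball $B$ and using compactness of $\partial B$; this is correct analytically but only produces \emph{analytic} open sets. The proposition explicitly asks for a Zariski open $V$, and the paper's later use of part~(ii) in Lemma~\ref{lem3} needs the neighborhood to be Zariski open so that the subsequent multiplier-ideal and flatness arguments take place in the algebraic category. Your $V_0$ (the complement of the image of the compact set $\partial B\cap Z$), the sheaf $\pi_*\cO_{Z\cap B}$, and your final $U=B\cap\varphi^{-1}(V)$ all live only in the analytic topology, and you give no passage back to Zariski. There is also a slip in~(ii): ``arrange $s(V)\subseteq B$'' is impossible for a fixed small ball $B$ once $V$ is the given Zariski open set from~(i); one must repeat the ball construction around each point of $s(V)$ and take a union. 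The gap is bridgeable --- for instance, the condition $\mu_{s(t)}(f_t)\le m$ is equivalent to the fiber at $t$ of the \emph{algebraic} finite scheme $Z\cap\big((m{+}1)\text{-st infinitesimal neighborhood of }s(T)\big)$ having length $\le m$, a Zariski-open condition --- but once you make this precise the analytic ball becomes superfluous and you have essentially recovered the paper's argument.
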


\begin{proof}
For the assertion in i), it is enough to find some nonempty open subset $V\subseteq T$ 
such that (\ref{eq1_semicont_Milnor_number}) holds for all $t\in V$.
Indeed, if $t_0\not\in V$, then arguing by induction on $\dim(T)$, we get
 an open neighborhood $V'$ of $t_0$ in $T\smallsetminus V$ that satisfies the required property for the induced morphism
 $\varphi^{-1}(T\smallsetminus V)\to T\smallsetminus V$. It is then clear that $V\cup V'$ satisfies i).

We have a closed subscheme ${\mathcal Z}$ of ${\mathcal X}$ such that for every $t\in T$, the fiber ${\mathcal Z}_t\hookrightarrow {\mathcal X}_t$
is the subscheme defined by $J_{f_t}$. Let $\psi\colon {\mathcal Z}\to T$ be the morphism induced by $\varphi$. It is easy to see that in order
to prove both i) and ii), we may assume that $s(T)\subseteq {\mathcal Z}$. 

By assumption, $s(t_0)$ is an isolated point in ${\mathcal Z}_{t_0}$. By semicontinuity of fiber dimension, there is an open neighborhood $W$ of
$t_0$ such that $s(t)$ is an isolated point in ${\mathcal Z}_t$ for all $t\in W$. In order to simplify the notation, let us replace ${\mathcal X}$ and $T$
by $\varphi^{_-1}(W)$ and $W$, so that we may assume $W=T$. It is then clear that $s(T)$ is the support of an irreducible component of $\cZ$.
Let $\cZ'\subseteq {\mathcal Z}$ be the union of the other irreducible components of ${\mathcal Z}$ and ${\mathcal Y}$ the
scheme-theoretic closure in ${\mathcal Z}$ of ${\mathcal Z}\smallsetminus {\mathcal Z}'$. Note that the induced morphism
$\psi_0\colon {\mathcal Y}\to T$ is finite, since it induces a bijective map, with inverse $s$, between the corresponding reduced schemes. 
In this case the function
\begin{equation}\label{eq2_semicont_Milnor_number}
T\ni t\to \dim_{\C(t)}\big((\psi_0)_*(\cO_{\Y})\otimes\C(t)\big)
\end{equation}
 is upper semicontinuous, where $\C(t)$ denotes the residue field of the point $t\in T$. Since 
 $$\dim_{\C(t)}\big((\psi_0)_*(\cO_{\cY})\otimes\C(t)\big)\leq\mu_{s(t)}(f_t)$$
 for every $t\in T$, with equality for general $t$ (we use the fact that $\cY$ is a closed subscheme of $\cZ$
 and they are equal around $s(t)$ for general $t\in T$), it follows that there is a nonempty
 open subset $V$ of $T$ such that $\mu_{s(t)}(f_t)\leq\mu_{s(t_0)}(f_{t_0})$ for all $t\in V$.
 As we have seen, this completes the proof of i).
 
 Suppose now that we have equality in (\ref{eq1_semicont_Milnor_number}) for all $t$ in an open subset $V$ of $T$.
Of course, we may and will assume that  $V$ is connected.
In this case we conclude that in fact the function in (\ref{eq2_semicont_Milnor_number}) is constant on $V$ 
and $\mu_{s(t)}(f_t)=\dim_{\C(t)}\big((\psi_0)_*(\cO_{\cY})\otimes\C(t)\big)$ for all $t\in V$. The constancy of the function implies
that $\psi_0^{-1}(V)$ is flat over $V$ (see for example 
 \cite[Theorem~III.9.9]{Hartshorne}). This flatness together with the fact that
 $$\dim_{\C(t)}\left(\cO_{\cZ_t,s(t)}\otimes\C(t)\right)=\dim_{\C(t)}\left(\cO_{\cY,s(t)}\otimes\C(t)\right)\quad\text{for every}\quad t\in V$$
 implies that $\cZ=\cY$ in a neighborhood of $\cY\cap\varphi^{-1}(V)$. This gives the assertion in ii).
\end{proof}

\begin{lem}\label{lem3}
With the notation in Set-up~\ref{basic_setup}, suppose that $0$ is a singular point of $f$ and that
$g=f(x_1,\ldots,x_{n-1},0)$ has an isolated singularity at $0$. If $\alpha\geq\theta_0(f)+1$, then there is a Zariski open
neighborhood $V$ of $1\in\C$ such that
$$\vlct_0(h_t)\leq \vlct_0(h_1)\quad\text{for all}\quad t\in V.$$
\end{lem}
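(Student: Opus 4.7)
The plan is to adapt Loeser's argument from the integer-$\alpha$ case: first establish Milnor-number constancy of the family $\widetilde{h}_t$ at $0$ for $t$ near $1$ (using the hypothesis $\alpha\geq\theta_0(f)+1$ together with Lemma~\ref{lem2.5}), and then translate this into the desired inequality on the virtual log canonical threshold.

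I first check that $\widetilde{h}_1=f\circ\pi$ has an isolated singularity at $0$: its partial derivatives are $(\partial_if)\circ\pi$ for $i<n$ and $dy_n^{d-1}\cdot(\partial_nf)\circ\pi$, so any common zero either has $y_n=0$ (which forces $(y_1,\ldots,y_{n-1})\in\Sing(g)=\{0\}$) or lies in $\pi^{-1}(\Sing(f))=\{0\}$. Applying Proposition~\ref{semicont_Milnor_number}(i) to the family $\widetilde{h}_t$ on $Y\times\C\to\C$ at $t_0=1$ (with section $s(t)=(0,t)$) then yields an open neighborhood $V_0\ni 1$ on which each $\widetilde{h}_t$ has an isolated singularity at $0$ and $\mu_0(\widetilde{h}_t)\leq\mu_0(\widetilde{h}_1)$.

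To upgrade this to equality, I use Lemma~\ref{lem2.5}. Locally near $t=1$ the analytic change of variables $z_n=t^{1/d}y_n$ converts $\widetilde{h}_t$ into $\widetilde{h}_1+c(t)z_n^{d\alpha}$ with $c(1)=0$, reducing the question to constancy of the Milnor number of the family $\widetilde{h}_1+cz_n^{d\alpha}$ at $c=0$. The valuation criterion for integral closure shows that $z_n^{d\alpha}\in\overline{J_{\widetilde{h}_1}\cdot\cO_{Y,0}}$: for a divisorial valuation $v$ centered at $0$, the required bound $d\alpha\cdot v(z_n)\geq\min\bigl(v(\partial_if\circ\pi):i<n,\,(d-1)v(z_n)+v(\partial_nf\circ\pi)\bigr)$ splits into a case handled directly by Lemma~\ref{lem2.5} together with $\alpha\geq\theta_0(f)$, and a case in which the minimum equals $(d-1)v(z_n)+v(\partial_nf\circ\pi)$, forcing $v(\partial_nf\circ\pi)=\min_iv(\partial_if\circ\pi)$ and reducing (again via Lemma~\ref{lem2.5}) to $d\alpha-d+1\geq d\theta_0(f)$, implied by $\alpha\geq\theta_0(f)+1$. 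The classical Milnor-number-constancy principle for perturbations in the integral closure of the Jacobian ideal (Brian\c{c}on--Speder/Teissier), or equivalently a direct closedness argument based on Proposition~\ref{semicont_Milnor_number} and the integer-valuedness of $\mu_0$, then gives $\mu_0(\widetilde{h}_t)=\mu_0(\widetilde{h}_1)$ throughout a Zariski-open neighborhood $V\ni 1$ contained in $V_0$.

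Proposition~\ref{semicont_Milnor_number}(ii) next supplies an open $U\subseteq Y\times V$ containing $\{0\}\times V$ such that, for every $t\in V$, the singular locus of $\widetilde{h}_t$ inside $U$ reduces to $\{0\}$; equivalently, the relative Jacobian scheme is flat over $V$. Setting $H(y,t)=\widetilde{h}_t(y)$ and $\cK:=\cJ(Y\times V,H^\lambda)$, one has $\cK|_{Y\times\{t\}}\subseteq\cJ(Y,\widetilde{h}_t^\lambda)$ for all $t$, with equality on a Zariski-open subset of $V$ by \cite[Theorem~9.5.35]{Lazarsfeld}. The main obstacle is promoting this equality to hold for \emph{every} $t\in V$---a multiplier-ideal analogue of Varchenko's constancy of the spectrum in $\mu$-constant families. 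I expect this to follow from the flatness noted above, either through a simultaneous embedded log resolution of $H$ over $V$, or through a Jacobian-theoretic argument in the spirit of the homogeneity trick used in the proof of Lemma~\ref{lem2}. Once the fiberwise equality is established, $\vlct_0(h_t)$ is forced to be constant on $V$, whence in particular $\vlct_0(h_t)\leq\vlct_0(h_1)$ for all $t\in V$, which is the stated inequality.
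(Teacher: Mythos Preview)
Your outline tracks the paper's proof through the first half. Two small slips: the inclusion from the Restriction Theorem goes the other way, namely $\cJ(Y,\widetilde{h}_t^{\lambda})\subseteq\cK\vert_{Y\times\{t\}}$ (the fiber multiplier ideal is contained in the restriction of the ambient one), and for the Milnor-number step what one actually needs is $z_n^{d\alpha-1}\in\overline{J_{\widetilde{h}_1}}$ rather than $z_n^{d\alpha}$, since it is $\partial_n(z_n^{d\alpha})$ that enters the perturbed Jacobian. (Your valuation computation in fact proves this stronger containment under the same hypothesis $\alpha\geq\theta_0(f)+1$; the paper phrases the conclusion via Hilbert--Samuel multiplicities, showing $J_s\subseteq\overline{J_1}$ and hence $e(J_s)\geq e(J_1)$.) The bare statement ``$g\in\overline{J_f}$ implies $\mu$-constancy of $f+tg$'' is false---take $f=x^2+y^2$, $g=x$---so your invocation of Brian\c{c}on--Speder/Teissier needs the sharpened input.

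The genuine gap is exactly where you flag it, and neither of your proposed workarounds succeeds. A simultaneous embedded log resolution of $H$ over $V$ is not available for arbitrary $\mu$-constant families; this is essentially equisingularity and is open in general. The homogeneity trick from Lemma~\ref{lem2} does not transplant: your change of variable $z_n=t^{1/d}y_n$ is analytic rather than algebraic and produces no grading on the total space near $t=1$. The paper closes the gap with two external Hodge-theoretic inputs: Varchenko's theorem that the spectrum is constant in a $\mu$-constant family of isolated hypersurface singularities, and Budur's theorem that for an isolated singularity the colengths $\dim_{\C}\big(\cO_{Y,0}/\cJ(Y,\widetilde{h}_t^{\lambda})\cO_{Y,0}\big)$ for $\lambda\in(0,1)$ are determined by the spectrum. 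These together force the colength of the fiber multiplier ideal to be constant on $V$; combined with the restriction inclusion (in the correct direction) and generic equality, this promotes the inclusion $\cJ(Y,\widetilde{h}_t^{\lambda})\subseteq\cK\vert_{Y\times\{t\}}$ to an equality for \emph{every} $t\in V$ and makes $Z_{\lambda}$ flat over $V$. A scheme-theoretic closure argument then propagates the condition $y_n^{d-1}\in\cJ(Y,\widetilde{h}_t^{\lambda})$ from general $t$ to $t=1$. Without the Varchenko--Budur input (or an equivalent), the argument does not close.
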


\begin{proof}
 Since $\widetilde{h}_t=f(y_1,\ldots,y_{n-1},ty_n^d)+(1-t)y_n^{d\alpha}$, it follows that
 \begin{equation}\label{eq1_lem3}
 \frac{\partial \widetilde{h}_t}{\partial y_i}=\frac{\partial f}{\partial x_i}(y_1,\ldots,y_{n-1},ty_n^d)\quad\text{for}\quad 1\leq i\leq n-1
 \end{equation}
 and 
 \begin{equation}\label{eq2_lem3}
 \frac{\partial \widetilde{h}_t}{\partial y_n}=\frac{\partial f}{\partial x_n}(y_1,\ldots,y_{n-1},ty_n^d)\cdot dty_n^{d-1}+d\alpha(1-t)y_n^{d\alpha-1}.
 \end{equation}
 
 For $t=1$, we see that $u=(u_1,\ldots,u_n)\in Y$ lies in the zero-locus of $J_{\widetilde{h}_1}$ if and only if $\pi(u)$ lies in the union of the zero-locus of $J_f$
 and of the zero-locus of $J_g$. By assumption, both $f$ and $g$ have isolated singularities at $0$. Since $\pi^{-1}(\{0\})=\{0\}$, we conclude that 
 $\widetilde{h}_1$ has isolated singularities at $0$. We deduce from Proposition~\ref{semicont_Milnor_number}i) 
 that there is a Zariski open neighborhood
 $V$ of $1$ such that $\widetilde{h}_t$ has an isolated singularity at $0$ and $\mu_0(\widetilde{h}_t)\leq\mu_0(\widetilde{h}_1)$
 for every $t\in V$. We may and will assume that $0\not\in V$. 
 The key point is to show that the lower bound on
 $\alpha$ allows us to conclude that
 $\mu_0(\widetilde{h}_t)$ is constant for $t\in V$. 
  
 Given $t\neq 0$, let $s\in\C$ be such that $s^d=t$, and consider the isomorphism $\varphi\colon \C[y_1,\ldots,y_n]\to \C[y_1,\ldots,y_{n-1},w]$,
 given by $\varphi(y_n)=w/s$ and $\varphi(y_i)=y_i$ for $1\leq i\leq n-1$. Note that the ideal $J_s:=\varphi(J_{\widetilde{h}_t})$ is generated by
 $\frac{\partial f}{\partial x_i}(y_1,\ldots,y_{n-1},w^d)$, for $1\leq i\leq n-1$, and
 $$Q_s:=\frac{\partial f}{\partial x_n}(y_1,\ldots,y_{n-1},w^d)\cdot w^{d-1}+\alpha\frac{1-s^d}{s^{d\alpha}}w^{d\alpha-1}.$$
 
 If we put $m=d(\alpha-1)$, then we have by hypothesis $m\geq d\cdot\theta_0(f)$. By Lemma~\ref{lem2.5}, we know that $w^m$ lies in the integral closure of the
 ideal $J$ of $R= \C[y_1,\ldots,y_{n-1},w]_{(y_1,\ldots,y_{n-1},w)}$ generated by
 $\frac{\partial f}{\partial x_i}(y_1,\ldots,y_{n-1},w^d)$, for $1\leq i\leq n$. Note that for every two ideals $\fra$ and $\frb$, we have
 $\overline{\fra}\cdot\overline{\frb}\subseteq \overline{\fra\cdot\frb}$. We thus see that
  \begin{equation}\label{eq3_lem3}
 w^{m+d-1}\in w^{d-1}\cdot\overline{J}\subseteq \overline{w^{d-1}\cdot J}\subseteq \overline{J_1}.
 \end{equation}
 For an $\frm$-primary ideal $\fra$ in $R$, where $\frm$ is the maximal ideal in $R$, we denote by $e(\fra)$ the Hilbert-Samuel multiplicity of $R$
 with respect to $\fra$ (for definition and basic properties of multiplicity, see \cite[Chapter~14]{Matsumura}). Note that by 
 \cite[Theorem~14.13]{Matsumura}, we have $e(\fra)=e({\overline{\fra}})$. Moreover, by \cite[Theorem~14.11]{Matsumura}, if $\fra$ is generated by a system of parameters in $R$
 (which is a regular sequence, since $R$ is Cohen-Macaulay), then $e(\fra)=\dim_{\C}(R/\fra)$. 
 Since $m+d-1=d\alpha-1$, it follows from (\ref{eq3_lem3}) that for every $t$ and $s$ as above, we have
 $Q_s\in\overline{ J_1}$, hence $J_s\subseteq \overline{J_1}$. The
 ideal $J_s\subseteq \frm$ is generated by a system of parameters, hence
 $$\mu_0(\widetilde{h}_t)=\dim_{\C}(R/J_s)=e(J_s)\geq e(\overline{J_1})=e(J_1)=\dim_{\C}(R/J_1)=\mu_0(\widetilde{h}_1).$$
 Since the opposite inequality holds by our choice of $V$, we see 
 that the Milnor number $\mu_0(\widetilde{h}_t)$ is constant for $t\in V$. 
 
 By a result of Varchenko \cite{Varchenko2} (see also \cite[Theorem~2.8]{Steenbrink}),
 the constancy of the Milnor number implies that the spectrum of $\widetilde{h}_t$ at $0$ is constant for $t\in V$. 
 Using the connection between the multiplier ideals of isolated singularities and spectrum, we deduce that in this case,
 for rational number $\lambda\in (0,1)$, the length
 $\dim_{\C}\big(\cO_{Y,0}/\cJ(Y,\widetilde{h}_t^{\lambda})\cO_{Y,0}\big)$ is independent of $t\in V$ (see \cite{Budur}, the main theorem as well as Proposition~2.9
 and its proof).

 Let us fix $\lambda$ as above.
 We also consider the hypersurface $H$ defined by $\widetilde{h}_t$ in $Y\times V$ and the multiplier ideal $\cJ_{\lambda}:=\cJ(Y\times V,\widetilde{h}_t^{\lambda})$. 
 For every $t_0\in V$, we identify $Y\times\{t_0\}$ with $Y$ in the obvious way. 
 Since $\mu_0(\widetilde{h}_t)$ is independent of $t\in V$, it follows 
 from Proposition~\ref{semicont_Milnor_number}ii)
 that there is an open neighborhood $W$ of $\{0\}\times V$ in
 $Y\times V$ such that for every $t_0\in V$, the singular locus of the hypersurface defined by $\widetilde{h}_{t_0}$ in $W\cap \big(Y\times\{t_0\}\big)$ is $\{0\}$. 
 In particular, the singular locus of $H\cap W$ is contained in $\{0\}\times V$. 
 Since $\lambda<1$, this implies that the subscheme $Z_{\lambda}$ of $W$ defined by $\cJ_{\lambda}$ is supported on $\{0\}\times V$. 
 Let $\tau_{\lambda}\colon Z_{\lambda}\to V$ be the finite morphism induced by the projection $Y\times V\to V$. 
 Note that the function
 $$V\ni t\to \dim_{\C(t)}(\tau_{\lambda})_*(\cO_{Z_{\lambda}})\otimes\C(t)$$
 is upper semicontinuous; moreover, it is constant if and only if $Z_{\lambda}$ is flat over $V$ (for the latter assertion, see for example 
 \cite[Theorem~III.9.9]{Hartshorne}).
 On the other hand, it follows from the Restriction Theorem for multiplier ideals (see \cite[Theorem~9.5.1]{Lazarsfeld}) 
 that for every $t_0\in V$, we have
 \begin{equation}\label{eq5_lem3}
 \cJ(Y,\widetilde{h}_{t_0}^{\lambda})\cdot\cO_{Y,0}\subseteq \cJ_{\lambda}\cdot\cO_{Y\times\{t_0\},(0,t_0)}.
 \end{equation} 
 Moreover, this is an equality for general $t_0\in V$ by the behavior of multiplier ideals in families (see \cite[Theorem~9.5.35]{Lazarsfeld}). 
Since  $\dim_{\C}\big(\cO_{Y,0}/\cJ(Y,\widetilde{h}_{t_0}^{\lambda})\cO_{Y,0}\big)$ is independent of $t_0\in V$, we conclude that 
 $Z_{\lambda}$ is flat over $V$ and we have equality in (\ref{eq5_lem3}) for all $t_0\in V$. A consequence of flatness is that
 $Z_{\lambda}$ is the scheme-theoretic closure of $\tau_{\lambda}^{-1}\big(V\smallsetminus\{1\}\big)$ in $Y\times V$ (see \cite[Proposition~III.9.8]{Hartshorne}
 and its proof).
 
After possibly replacing $V$ by a smaller neighborhood of $1$, we may assume that $\vlct_0(h_{t_0})=c$ for all $t_0\in V\smallsetminus\{1\}$
(since there are only finitely many distinct ideals $\cJ_{\lambda}$, with $\lambda\in (0,1)$, and for every such $\lambda$, the set of those $t_0$
with $y_n^{d-1}\in \cJ_{\lambda}\cdot\cO_{Y\times\{t_0\},(0,t_0)}$ is a constructible subset of $V$). We need to show that $\vlct_0(h_1)\geq c$.
For every rational number $\lambda\in (0,c)$, we deduce from (\ref{eq5_lem3}) and our assumption that $y_n^{d-1}\in\cJ_{\lambda}\cdot\cO_{Y\times\{t_0\},(0,t_0)}$ for all $t_0\in V\smallsetminus\{1\}$. 
 This implies that the closed subscheme of $Y\times V$ defined by $y_n^{d-1}$
contains $\tau_{\lambda}^{-1}\big(V\smallsetminus\{1\}\big)$ and thus also contains its scheme-theoretic closure $Z_{\lambda}$. 
Therefore $y_n^{d-1}\in \cJ_{\lambda}\cdot\cO_{Y\times\{1\},(0,1)}=\cJ(Y,\widetilde{h}_{1}^{\lambda})\cdot\cO_{Y,0}$, that is,
$\vlct_0(h_1)>\lambda$. This holds for every $\lambda<c$, hence $\vlct_0(h_1)\geq c$, completing the proof of the lemma. 
\end{proof}

We can now give the proof of our main result.

\begin{proof}[Proof of Theorem~\ref{thm_main}]
Note first that arguing as in Remark~\ref{non_isolated_sing}, we may assume that $g=f\vert_H$ has an isolated singularity at $P$.
Arguing as in Remark~\ref{holomorphic_case}, we may further assume that $X=\C^n$ and $H$ is the hyperplane $(x_n=0)$.
If $f$ is smooth at $0$, then $\lct_P(f)=1$ and $\theta_0(f)=0$, hence the assertion in the theorem is trivial. From now on, we assume that
$f$ is not smooth at $0$, hence $\theta_0(f)>0$.

Let $\alpha=\theta_0(f)+1$ and consider the definitions and notation in Setting~\ref{basic_setup}.
By Lemma~\ref{lem2}, there is a Zariski open neighborhood $U$ of $0\in\C$ such that
\begin{equation}\label{eq1_proof}
\vlct_0(\widetilde{h}_t)\geq \vlct_0(\widetilde{h}_0)\quad\text{for all}\quad t\in U.
\end{equation}
We also know that $\vlct(\widetilde{h}_0)=\min\{\lct_0(g)+\frac{1}{\theta_0(f)+1},1\}$ by Lemma~\ref{lem1}
and that $\vlct_0(\widetilde{h}_1)=\lct_0(f)$. Finally, it follows from Lemma~\ref{lem3} that there is a
Zariski open neighborhood $V$ of $1\in\C$ such that
\begin{equation}\label{eq2_proof}
\vlct_0(\widetilde{h}_t)\leq \vlct_0(\widetilde{h}_1).
\end{equation}
By taking $t\in U\cap V$, we thus conclude using (\ref{eq1_proof}) and (\ref{eq2_proof}) that
$$\lct_0(f)=\vlct_0(\widetilde{h}_1)\geq \vlct_0(\widetilde{h}_t)\geq \vlct_0(\widetilde{h}_0)=\min\left\{\lct_0(g)+\frac{1}{\theta_0(f)+1},1\right\}.$$
This completes the proof of the theorem.
\end{proof}

\section{Examples}
We end the paper with two examples regarding the computation and the combinatorial nature of the invariants involved in Theorem \ref{thm_main} (and Conjecture \ref{conj2}).

For $u=(a_1,\ldots,a_n)\in\Z^n_{\geq 0}$, we write $x^u=x_1^{a_1}\ldots x_n^{a_n}$. Let $\mathfrak{a}=(x^{u_1},\ldots, x^{u_k})\subseteq \C[x_1,\ldots, x_n]$ for some positive integer $k$ and $u_1,\ldots, u_k\in \Z^n_{\geq 0}\backslash \{0\}$, all of them distinct. Let $f_\alpha=\sum_{i=0}^k \alpha_i x^{u_i}$ for $\alpha=(\alpha_1,\ldots,\alpha_k)\in\C^k$. Note that for a general 
$\alpha\in\C^k$, we have 
\begin{equation}\label{eq_example_1}
\lct_0(f_\alpha)=\min\{\lct_0(\mathfrak{a}),1\}
\end{equation}
(see for example \cite[Example~1.10]{Mustata}).
On the other hand, it follows from Howald's formula (see \cite[Example 1.9]{Mustata}) that 
we can compute $\lct_0(\mathfrak{a})$ using \textit{monomial valuations} to get 
\begin{equation}\label{eqn_lct}
\lct_0(\mathfrak{a})=\min_{v\in\Z_{\geq 0}^n\smallsetminus\{0\}} \frac{v_1+\ldots +v_n}
{\min\{\langle u,v\rangle\ \vert\ u\in P(\mathfrak{a})\}},
\end{equation}
where the minimum is over all nonzero $v=(v_1,\ldots,v_n)\in\Z_{\geq 0}^n$, 
we denote by $\langle -,-\rangle$ the usual scalar product in $\Z^n$, and
$P(\mathfrak{a})$ is the Newton polyhedron of $\mathfrak{a}$, defined as
$$
P(\mathfrak{a})= \text{convex hull }\big(\{u\in \Z^n_{\geq 0}\ \vert\ x^u\in\mathfrak{a}\}\big).
$$

In general, if $f\in \C[x_1,\ldots, x_n]$ has an isolated singularity at $0$, we get a lower bound for $\theta_0(f)$ using monomial valuations by the formula
\begin{equation}\label{eqn_bound}
\theta_0(f)\geq\sup_{v\in\Z_{>0}^n} \frac{\min\{\langle u,v\rangle\ \vert\ u\in P\big(\frb(J_f)\big)\}}{\min\{v_1,\ldots,v_n\}},
\end{equation}
where the minimum is over all $v=(v_1,\ldots,v_n)\in\Z_{> 0}^n$ and $\frb(J_f)$ is the smallest monomial ideal containing $J_f$. 
This follows from (\ref{eq1_theta}) and the fact that monomial valuations with center $\{0\}$
correspond to (primitive) $v\in\Z_{>0}^n$; note also 
that for every such valuation $w$, we have $w(J_f)\geq w\big(\frb(J_f)\big)$. 

If, in addition, $J_{f}$ is a monomial ideal, then we have the equality
\begin{equation}\label{eqn_theta}
\theta_0(f)=\max_{v\in\Z_{>0}^n} \frac{\min\{\langle u,v\rangle\ \vert\ u\in P(J_f)\}}{\min\{v_1,\ldots,v_n\}}.
\end{equation}
Indeed, this follows from (\ref{eq2_theta}) and the fact that the normalized blow-up at the monomial ideal $\frm_0\cdot J_{f}$ is a map of toric varieties, hence
 the corresponding prime divisors over $0$ are torus invariant and give rise to monomial valuations corresponding to primitive elements in $\Z_{>0}^n$.

We say that a divisorial valuation $w={\rm ord}_E$ of $\C(x_1,\ldots,x_n)$ computes the log canonical threshold ${\rm lct}_0(f)$ of $f$ if $0$ lies in the closure of the image of $E$
and if $w$ achieves the minimum in the definition of $\lct_0(f)$ via divisorial valuations. If $H$ is a hyperplane containing $0$, then it is a consequence of Inversion of Adjunction (see \cite[Corollaries~9.5.11, 9.5.17]{Lazarsfeld}) that
$$\lct_0(f\vert_H)={\rm lct}_0\big((\C^n,H),f\big).$$
We thus say that a divisorial valuation $w$ of $\C(x_1,\ldots,x_n)$ computes $\lct_0(f\vert_H)$ if it computes ${\rm lct}_0\big((\C^n,H),f\big)$.

We first give an example in which the inequality in Theorem~\ref{thm_main} is an equality, we have a unique divisorial valuation that computes
$\lct_0(f)$, this also computes $\lct_0(f\vert_H)$, but does not compute $\theta_0(f)$.

\begin{eg}
Let $f(x_1,\ldots,x_n)=x_1^{a_1}+\ldots+x_n^{a_n}$, with $a_1\geq a_2\geq\ldots\geq a_n\geq 2$, $a_1>a_n$ and $n\geq  2$, and let $H$ be the hyperplane defined by $x_1=0$. Note that both $f$ and $f\vert_{H}$ have isolated singularities at the origin. Suppose that $\frac{1}{a_1}+\ldots+\frac{1}{a_n}\leq 1$. Using (\ref{eq_example_1}) and (\ref{eqn_lct}), we see that
$$
\lct_0(f)=\lct_0\big((x_1^{a_1},\ldots,x_n^{a_n})\big)=\frac{1}{a_1}+\ldots+\frac{1}{a_n}.
$$
Moreover, the minimum in (\ref{eqn_lct}) is only achieved when $v$ is a multiple of 
$$\lcm(a_1,\ldots,a_n)\left(\frac{1}{a_1},\ldots,\frac{1}{a_n}\right);$$
the corresponding divisorial valuation ${\rm ord}_E$ is the unique one computing $\lct_0(f)$.
Similarly, we see that $\lct_0(f\vert_H)=\frac{1}{a_2}+\ldots+\frac{1}{a_n}$ and it is straightforward to check that
${\rm ord}_E$ also computes $\lct_0(f\vert_H)$.

Using (\ref{eqn_theta}), one can check that $\theta_0(f)=a_1-1$, which is achieved, for example, when $v=(1,\ell,\ldots,\ell)$ for large $\ell$. However, 
we note that 
since $a_1\neq a_n$, if we plug in \linebreak $v=\lcm(a_1,\ldots,a_n)\left(\frac{1}{a_1},\ldots,\frac{1}{a_n}\right)$ to the right-hand side of (\ref{eqn_theta}), we do not achieve the maximum. Hence, we cannot use the same valuation to compute both $\lct_0(f)$ and $\theta_0(f)$.

Finally, note that in this example, the inequality in Theorem \ref{thm_main} is an equality.
\end{eg}

Our second example deals with the case when $J_{f_\alpha}$ is not a monomial ideal. It shows that in this case, even if $\alpha$ is general, we cannot use monomial valuations to bound $\theta_0(f_\alpha)$ via (\ref{eqn_bound}) well enough to prove Theorem \ref{thm_main} combinatorially.

\begin{eg}
Let $f_\alpha=\alpha_1x^7+\alpha_2y^2+\alpha_3x^5y\in \C[x,y]$, with $\alpha=(\alpha_1,\alpha_2,\alpha_3)\in(\C^*)^3$ general. Let $H$ be
a line with equation of the form $x=\beta y$, for any $\beta\in\C$. Using (\ref{eq_example_1})
and (\ref{eqn_lct}), we compute
$$
\lct_0(f_\alpha)=\frac{9}{14}\quad\text{and}\quad
\lct_0(f_\alpha\vert_H)=\frac{1}{2}.
$$
We deduce from Theorem \ref{thm_main}  that $\frac{1}{1+\theta_0(f_\alpha)}\leq \frac{1}{7}$, or equivalently, $\theta_0(f_\alpha)\geq 6$. However, equation (\ref{eqn_bound}) only gives us the weaker bound
$$
\theta_0(f_{\alpha})\geq \sup_{v\in\Z^2_{>0}} \frac{\min\{\langle u,v\rangle\ \vert\ u\in P\big(\frb(J_{f_\alpha})\big)\}}{\min\{v_1,v_2\}}=5.
$$
\end{eg}

\section*{References}
\begin{biblist}

\bib{Budur}{article}{
   author={Budur, N.},
   title={On Hodge spectrum and multiplier ideals},
   journal={Math. Ann.},
   volume={327},
   date={2003},
   number={2},
   pages={257--270},
}

\bib{DemaillyKollar}{article}{
   author={Demailly, J.-P.},
   author={Koll\'{a}r, J.},
   title={Semi-continuity of complex singularity exponents and
   K\"{a}hler-Einstein metrics on Fano orbifolds},
   journal={Ann. Sci. \'{E}cole Norm. Sup. (4)},
   volume={34},
   date={2001},
   number={4},
   pages={525--556},
}

\bib{DM}{article}{
      author={Dirks, B.},
      author={Musta\c{t}\u{a}, M.},
      title={Minimal exponents of hyperplane sections: a conjecture of Teissier},
      journal={preprint arXiv:2008.10345},
      date={2020},
}

\bib{GLS}{book}{
   author={Greuel, G.-M.},
   author={Lossen, C.},
   author={Shustin, E.},
   title={Introduction to singularities and deformations},
   series={Springer Monographs in Mathematics},
   publisher={Springer, Berlin},
   date={2007},
   pages={xii+471},
  }

\bib{Hartshorne}{book}{
   author={Hartshorne, R.},
   title={Algebraic geometry},
   note={Graduate Texts in Mathematics, No. 52},
   publisher={Springer-Verlag, New York-Heidelberg},
   date={1977},
   pages={xvi+496},
}

\bib{JM}{article}{
   author={Jow, S.-Y.},
   author={Miller, E.},
   title={Multiplier ideals of sums via cellular resolutions},
   journal={Math. Res. Lett.},
   volume={15},
   date={2008},
   number={2},
   pages={359--373},
}

\bib{Kollar}{article}{
   author={Koll\'ar, J.},
   title={Singularities of pairs},
   conference={
      title={Algebraic geometry---Santa Cruz 1995},
   },
   book={
      series={Proc. Sympos. Pure Math.},
      volume={62},
      publisher={Amer. Math. Soc., Providence, RI},
   },
   date={1997},
   pages={221--287},
}

\bib{KM}{book}{
   author={Koll\'{a}r, J.},
   author={Mori, S.},
   title={Birational geometry of algebraic varieties},
   series={Cambridge Tracts in Mathematics},
   volume={134},
   note={With the collaboration of C. H. Clemens and A. Corti;
   Translated from the 1998 Japanese original},
   publisher={Cambridge University Press, Cambridge},
   date={1998},
   pages={viii+254},
}

\bib{Lazarsfeld}{book}{
       author={Lazarsfeld, R.},
       title={Positivity in algebraic geometry II},  
       series={Ergebnisse der Mathematik und ihrer Grenzgebiete},  
       volume={49},
       publisher={Springer-Verlag, Berlin},
       date={2004},
}

\bib{Loeser}{article}{
   author={Loeser, F.},
   title={Exposant d'Arnold et sections planes},
   journal={C. R. Acad. Sci. Paris S\'{e}r. I Math.},
   volume={298},
   date={1984},
   number={19},
   pages={485--488},
}

\bib{Malgrange}{article}{
   author={Malgrange, Bernard},
   title={Int\'{e}grales asymptotiques et monodromie},
   language={French},
   journal={Ann. Sci. \'{E}cole Norm. Sup. (4)},
   volume={7},
   date={1974},
   pages={405--430 (1975)},
}

\bib{Matsumura}{book}{
   author={Matsumura, H.},
   title={Commutative ring theory},
   series={Cambridge Studies in Advanced Mathematics},
   volume={8},
   edition={2},
   note={Translated from the Japanese by M. Reid},
   publisher={Cambridge University Press, Cambridge},
   date={1989},
   pages={xiv+320},
}

\bib{Mustata0}{article}{
   author={Musta\c{t}\v{a}, M.},
   title={Singularities of pairs via jet schemes},
   journal={J. Amer. Math. Soc.},
   volume={15},
   date={2002},
   number={3},
   pages={599--615},
}

\bib{Mustata}{article}{
   author={Musta\c{t}\u{a}, M.},
   title={IMPANGA lecture notes on log canonical thresholds},
   note={Notes by Tomasz Szemberg},
   conference={
      title={Contributions to algebraic geometry},
   },
   book={
      series={EMS Ser. Congr. Rep.},
      publisher={Eur. Math. Soc., Z\"{u}rich},
   },
   date={2012},
   pages={407--442},
}

\bib{MP}{article}{
      author={Musta\c t\u a, M.},
      author={Popa, M.},
      title={Hodge ideals for ${\mathbf Q}$-divisors, $V$-filtration, and minimal exponent},
      journal={Forum Math. Sigma},
      pages={Paper No. e19, 41 pp.},
      date={2020},
}

\bib{Steenbrink}{article}{
   author={Steenbrink, J. H. M.},
   title={Semicontinuity of the singularity spectrum},
   journal={Invent. Math.},
   volume={79},
   date={1985},
   number={3},
   pages={557--565},
}

\bib{Takagi}{article}{
   author={Takagi, S.},
   title={Formulas for multiplier ideals on singular varieties},
   journal={Amer. J. Math.},
   volume={128},
   date={2006},
   number={6},
   pages={1345--1362},
}

\bib{Teissier1}{article}{
   author={Teissier, B.},
   title={Vari\'{e}t\'{e}s polaires. I. Invariants polaires des singularit\'{e}s
   d'hypersurfaces},
   language={French},
   journal={Invent. Math.},
   volume={40},
   date={1977},
   number={3},
   pages={267--292},
}

\bib{Teissier2}{article}{
   author={Teissier, B.},
   title={Poly\`{e}dre de Newton jacobien et \'{e}quisingularit\'{e}},
   conference={
      title={Seminar on Singularities},
      address={Paris},
      date={1976/1977},
   },
   book={
      series={Publ. Math. Univ. Paris VII},
      volume={7},
      publisher={Univ. Paris VII, Paris},
   },
   date={1980},
   pages={193--221},
   translation={
   date={arXiv:1203.5595},
   },
}

\bib{Varchenko2}{article}{
   author={Varchenko, A. N.},
   title={The complex singularity index does not change along the stratum
   $\mu ={\rm const}$},
   language={Russian},
   journal={Funktsional. Anal. i Prilozhen.},
   volume={16},
   date={1982},
   number={1},
   pages={1--12, 96},
}

\bib{ZS}{book}{
   author={Zariski, O.},
   author={Samuel, P.},
   title={Commutative algebra. Vol. II},
   note={Reprint of the 1960 edition;
   Graduate Texts in Mathematics, Vol. 29},
   publisher={Springer-Verlag, New York-Heidelberg},
   date={1975},
   pages={x+414},
}

\end{biblist}

\end{document}